\DeclareMathOperator{\dist}{dist}
\DeclareMathOperator{\graph}{graph}
\DeclareMathOperator{\Proj}{proj}
\begin{document}
%\linenumbers

\newtheorem{oberklasse}{OberKlasse}
\newtheorem{lemma}[oberklasse]{Lemma}
\newtheorem{proposition}[oberklasse]{Proposition}
\newtheorem{corollary}[oberklasse]{Corollary}
\newtheorem{definition}[oberklasse]{Definition}
\newtheorem{example}[oberklasse]{Example}
\newtheorem{hypothesis}{Hypothesis}
\renewcommand*{\thehypothesis}{\Alph{hypothesis}}
\newcommand{\R}{\ensuremath{\mathbbm{R}}}
\newcommand{\N}{\ensuremath{\mathbbm{N}}}
\newcommand{\Z}{\ensuremath{\mathbbm{Z}}}
\renewcommand{\epsilon}{\varepsilon}

\title{The Euler scheme for state constrained ordinary differential inclusions}
\author{Janosch Rieger\footnote{Department of Mathematics, Imperial College London, 180 Queen's Gate, London SW7 2AZ, United Kingdom}}
\date{\today}
\maketitle

\begin{abstract}
We propose and analyze a variation of the Euler scheme for state constrained ordinary differential inclusions
under weak assumptions on the right-hand side and the state constraints.
Convergence results are given for the space-continuous and the space-discrete versions of this scheme, and 
a numerical example illustrates in which sense these limits have to be interpreted.
\end{abstract}

{\bf Keywords:} differential inclusion, state constraints, num\-e\-ri\-cal method

{\bf AMS subject classifications:} 34A60, 65L20, 49J15

\section{Introduction}

Numerical methods for ordinary differential inclusions have mainly been considered in the unconstrained case
and for convex-valued right-hand sides.
Summaries of early results can be found in the surveys \cite{Dontchev:Lempio:92} and \cite{Lempio:Veliov:98}.
An approach using subdivision and continuation techniques for time-independent systems with affine controls 
was given in \cite{Szolnoki:03}.

The variation of the Euler scheme proposed in \cite{Grammel:03} only uses extremal points of the right-hand side,
so that a fully discretized scheme is obtained for polytope-valued right-hand sides.
The high complexity of this method motivated the detailed analysis of spatial discretization effects on 
the Euler scheme given in \cite{Beyn:Rieger:07}.
A further reduction of computational costs can be achieved by tracking the boundaries of the reachable sets 
of the fully discretized Euler scheme using only lower-dimensional data of the right-hand side, see \cite{Rieger:14}.

A very different type of algorithm has been proposed in \cite{Baier:Gerdts:Xausa:13}, where a reachable set is 
computed by an optimal control routine that minimizes the distance to grid points in the relevant region at the
terminal time. 
As the optimizer may find a local instead of a global minimum, the propriety of this method cannot be guaranteed.

An implicit Euler scheme has been analyzed in \cite{Beyn:Rieger:10}, which is considerably more efficient than
the explicit Euler scheme when applied to stiff differential inclusions. 
The disadvantage of this method is that it has to solve an algebraic inclusion in every time step.
This problem is successfully avoided by the semi-implicit Euler schemes discussed in \cite{Rieger:14a},
leading to a better performance of the algorithm.

The Euler scheme for differential inclusions with nonconvex right-hand sides has been investigated in \cite{Sandberg:08}.
Error estimates for the Euler scheme applied to a differential inclusion with convex, but one-sided Lipschitz 
right-hand side have been published in \cite{Donchev:Farkhi:98}.

\medskip

Up to our knowledge, the only convergence analysis of the Euler scheme for ordinary differential inclusions with state 
constraints has been given in the paper \cite{Baier:Chahma:Lempio:07}.
The analysis is based on stability theorems that quantify the relationship between the solution set of the 
constrained problem and the same differential inclusion without constraints. 
These results play an important role in optimal control theory and are still a subject of intensive research, 
see \cite{Bettiol:Bressan:Vinter:10}, \cite{Bettiol:Frankowska:Vinter:12} and the references therein.

The drawback of these stability theorems in the present context are the strict assumptions on the compatibility
between right-hand side and constraints that are needed to obtain such a result.
In applications like collision avoidance, see \cite{Gerdts:Xausa:13}, 
\cite{Landry:Gerdts:Henrion:Hoemberg:13} and the references therein, such assumptions do certainly not hold.

\medskip

The present article is motivated by the lack of a convergence analysis for the Euler scheme in the present of state 
constraints under weaker conditions.
In Section \ref{setting:notation} we fix the notation, give a formal problem statement and formulate our basic
assumptions, that the initial set is compact, the right-hand side is Lipschitz, the state constraints are upper 
semicontinuous in time and that all data are compatible in the very weak sense that there is a nontrivial time 
interval on which a solution of the state constrained inclusion exists.

Under these conditions, we prove some auxiliary results in Section \ref{auxiliary}, which are used in
Section \ref{inflated} to show that the solution sets of the inflated Euler scheme converge towards the 
exact solution sets in Hausdorff distance.
In Section \ref{space:disc}, we show that the same holds for a spatially discretized version
of the scheme provided all constraints are relaxed in a proper way.
A numerical example given in Section \ref{num:ex} illustrates in which sense this limit has to be interpreted.
Note that convergence results for the inflated Euler scheme without spatial discretization are of interest for 
first-discretize-then-optimize strategies in optimal control and algorithms such as the one proposed in \cite{Baier:Gerdts:Xausa:13}, 
which consider the Euler scheme as a constraint of a finite-dimensional optimization problem.

At the end of the paper, in Section \ref{stability:sec}, we give a brief explanation how stability results in
the spirit of \cite{Bettiol:Bressan:Vinter:10} and \cite{Bettiol:Frankowska:Vinter:12} automatically yield 
linear convergence of the inflated Euler scheme in Hausdorff distance.

\section{Setting and notation} \label{setting:notation}

Throughout this paper, the Euclidean norm and the modulus will be denoted by $\|\cdot\|:\R^d\to\R_+$ and $|\cdot|:\R\to\R_+$.
As some of the following concepts will be used not only in the state space $\R^d$, but also in the space of continuous functions, 
we introduce them in a general Banach space $E$.

Let $A,B\subset E$ be compact. Then
\[\dist(A,B):=\max_{a\in A}\min_{b\in B}\|a-b\|\]
is called the Hausdorff semidistance between $A$ and $B$.
In the particular case, where $A=\{a\}$ with $a\in E$, the Hausdorff semidistance is the usual distance $\dist(a,A)$
from a point to a set, and we denote 
\[B_R(A):=\{x\in E:\dist(x,A)\le R\}.\]
The Hausdorff distance
\[\dist_H(A,B):=\max\{\dist(A,B),\dist(B,A)\}\]
is a metric on the compact subsets of $E$.
The metric projection from $x\in E$ to $A$ is the set
\[\Proj(x,A):=\{a\in A: \|x-a\|=\dist(x,A)\}.\]
Since $A$ is compact, the metric projection is nonempty. 
If, in addition, $E$ is a Hilbert space and $A$ is convex, then $\Proj(x,A)$ is a singleton.

By $W^{1,1}([0,\tau],\R^d)$ with $\tau\in(0,\infty)$, we denote the Banach space of absolutely continuous functions from $[0,\tau$] to $\R^d$, 
i.e.\ the space of all functions, which possess a weak derivative in $L^1((0,\tau),\R^d)$.

\medskip

Let $F:[0,T]\times\R^d\rightrightarrows\R^d$ be a set-valued mapping. 
A solution to the ordinary differential inclusion
\begin{align} 
\dot x(t) &\in F(t,x(t)) \label{ODI}
\end{align}
is an absolutely continuous function $x(\cdot)\in W^{1,1}([0,T],\R^d)$ satisfying \eqref{ODI} almost everywhere in $[0,T]$.
Depending on the application, solutions may be required to satisfy an initial condition
\begin{align}
x(0) &\in X_0 \label{ODIIC}
\end{align}
with initial set $X_0\subset\R^d$ and state constraints
\begin{align} 
x(t) &\in A(t) \label{ODISC}
\end{align}
with $A:[0,T]\rightrightarrows\R^d$.
For $\tau\in(0,T]$, we denote the solution sets of the unconstrained and the constrained initial value problems by
\begin{align*}
S^u(\tau) := &\{x(\cdot)\in W^{1,1}([0,\tau],\R^d): \text{\eqref{ODI} holds a.e.\ in $(0,\tau)$,\ \eqref{ODIIC} holds}\},\\
S^c(\tau) := &\{x(\cdot)\in S^u(\tau): x(t)\ \text{satisfies \eqref{ODISC} for all}\ t\in[0,\tau]\}.
\end{align*}
These sets will be approximated by the corresponding solution sets of the inflated Euler scheme.
For $N\in\N\setminus\{0\}$, set $h_N:=T/N$, define a temporal grid $t_{N,n}=nh_N$, $n=0,\ldots,N$, and let 
$\beta_N,\delta_N\in[0,\infty)$ be numbers to be specified later.
Then any sequence $(y_{N,n})_n\subset\R^d$ satisfying 
\begin{align}
y_{N,n+1} \in y_{N,n} + h_NF(t_{N,n},y_{N,n}) + B_{\beta_N}(0), %\ \text{for}\ n=0,\ldots,N-1, 
\label{Euler}
\end{align}
is called a solution of the inflated Euler scheme with step-size $h_N$, 
which may as well be subject to a possibly relaxed initial condition
\begin{align}
\dist(y_{N,0}, X_0)\le\delta_N \label{Euler:IC}
\end{align}
and possibly relaxed state constraints
\begin{align}
\dist(y_{N,n},A(t_{N,n}))\le\delta_N. %\ \text{for}\ n=0,\ldots,N. 
\label{Euler:SC}
\end{align}
For $\nu\in\{0,\ldots,N\}$, we denote the solution sets of the discrete unconstrained and the discrete constrained problems 
with time horizon $\nu$ by
\begin{align*}
S^u_N(\nu,\beta_N,\delta_N) := &\{(y_{N,n})_{n=0}^{\nu}\subset\R^d: \text{\eqref{Euler} holds for $0\le n\le \nu-1$, 
\eqref{Euler:IC} holds}\},\\
S^c_N(\nu,\beta_N,\delta_N) := &\{(y_{N,n})_{n=0}^{\nu}\in S^u_N(\nu,\beta_N,\delta_N): \text{\eqref{Euler:SC} 
holds for $0\le n\le \nu$}\},
\end{align*}
and we identify these sequences with continuous functions $y_N(\cdot):[0,\nu h]\rightarrow\R^d$ via piecewise linear interpolation
of the data $(t_{N,n},y_{N,n})_{n=0}^\nu$,
where the representant of the derivative is chosen piecewise constant on intervals of the form $[t_{N,n},t_{N,n+1})$.
 
\medskip

Throughout this paper, we posit the following assumptions on the ordinary differential inclusion and the constraints.
\begin{hypothesis} \label{weak:hypothesis}
The right-hand side $F$, the constraints $A$ and the initial set $X_0$ satisfy the following conditions:
\begin{itemize}
\item [(1)] The mapping $F:[0,T]\times\R^d\rightrightarrows\R^d$ has convex and compact values 
and is $L$-Lipschitz, i.e.\ there exists some $L>0$ such that
\[\dist(F(t,x),F(t',x')) \le L|t-t'| + L\|x-x'\|\ \text{for all}\ t,t'\in[0,T],\ x,x'\in\R^d.\]
\item [(2)] The state constraints $A:[0,T]\rightrightarrows\R^d$ have closed values and are upper semicontinuous in the sense that
$t=\lim_{n\rightarrow\infty}t_n$, $x_n\in A(t_n)$ and $x=\lim_{n\rightarrow\infty}x_n$ imply $x\in A(t)$.
\item [(3)] The set $X_0\subset\R^d$ is compact, and the data $F$, $A$ and $X_0$ are compatible in the sense 
that there exists $\tau\in(0,T]$ with $S^c(\tau)\neq\emptyset$.
\end{itemize}
\end{hypothesis}

We will also posit assumptions on the blowup size $\beta_N$ of the Euler scheme 
and the relaxation parameter $\delta_N$ for the initial condition and the constraints.
\begin{hypothesis} \label{blowup:hypothesis}
The parameters $\beta_N$ and $\delta_N$ satisfy the following conditions:
\begin{itemize}
\item [(1)] The blowup size satisfies $\tfrac{\beta_N}{h_N}\to 0$ as $N\to\infty$.
\item [(2)] The relaxation parameter satisfies $\delta_N\to 0$ as $N\to\infty$.
\end{itemize}
\end{hypothesis}
The relaxation parameter $\delta_N$ will not be needed before Section \ref{space:disc}.
We introduce it at this early stage, because we wish to provide a-priori bounds
in Lemma \ref{a:priori:bounds}, which apply to all exact and numerical trajectories 
in this paper.

\section{Auxiliary results} \label{auxiliary}

Since $S^c(\tau)\subset S^u(\tau)$ and 
$S^c_N(\nu,\beta_N,\delta_N)\subset S^u_N(\nu,\beta_N,\delta_N)$,
the bounds given below hold for the constrained problems as well.
\begin{lemma} \label{a:priori:bounds}
We have the a-priori bounds 
\[\dist(x(t),X_0) \le \hat R(t):= \tfrac{1}{L}(e^{Lt}-1)(\sup_{x\in X_0}\|F(0,x)\|+1)\quad \forall t\in[0,\tau]\]
for all $x(\cdot)\in S^u(\tau)$ and
\[\dist(y_{N,n},X_0) \le \hat R_N(n)
:= e^{Lt_{N,n}}(\delta_N + \tfrac{\beta_N}{Lh_N} + \tfrac{1}{L}\sup_{x\in X_0}\|F(0,x)\|+1)\quad 
\forall n\in\{0,\ldots,\nu\}\]
for all $(y_{N,n})_{n=0}^\nu\in S_N^u(\nu,\beta_N,\delta_N)$.
Moreover, we have
\[R:=\max\{1,\tfrac{1}{L}\}e^{LT}(\sup_{x\in X_0}\|F(0,x)\|+ \sup_{N\in\N}(\delta_N+\tfrac{\beta_N}{Lh_N})+1)<\infty,\]
we have $\hat R(t)\le R$ for all $t\in[0,T]$ and $\hat R_N(n)\le R$ for all $n\in\{0,\ldots,N\}$ and $N\in\N$, 
and for all $(t,x)\in[0,T]\times B_R(X_0)$ we have
\[\|F(t,x)\|\le P:= L(T+R)+\sup_{x'\in X_0}\|F(0,x')\| < \infty.\]
\end{lemma}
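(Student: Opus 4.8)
The plan is to handle the finiteness claims, the two Gronwall-type a-priori bounds, and the growth bound on $F$ in turn, using throughout that by item~(1) of Hypothesis~\ref{weak:hypothesis} the function $x\mapsto\max_{v\in F(0,x)}\|v\|$ is $L$-Lipschitz (since $|\max_{v\in A}\|v\|-\max_{w\in B}\|w\||\le\dist_H(A,B)$ for compact $A,B$), hence continuous, hence bounded on the compact set $X_0$, so that $M_0:=\sup_{x\in X_0}\|F(0,x)\|<\infty$; since moreover $\delta_N\to0$ and $\tfrac{\beta_N}{h_N}\to0$ by Hypothesis~\ref{blowup:hypothesis}, the sequences $(\delta_N)_N$ and $(\tfrac{\beta_N}{h_N})_N$ are bounded, so $\sup_{N\in\N}(\delta_N+\tfrac{\beta_N}{Lh_N})<\infty$ and therefore $R<\infty$.

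For an exact trajectory $x(\cdot)\in S^u(\tau)$ I would proceed as follows. Since $x(0)\in X_0$, we have $\dist(x(t),X_0)\le\|x(t)-x(0)\|\le\int_0^t\|\dot x(s)\|\,ds=:w(t)$, and for a.e.\ $s\in(0,\tau)$ the inclusion $\dot x(s)\in F(s,x(s))$ together with the Lipschitz property of $F$ gives $\|\dot x(s)\|\le\max_{v\in F(0,x(0))}\|v\|+Ls+L\|x(s)-x(0)\|\le M_0+Ls+L\,w(s)$. Hence $w$ is absolutely continuous with $w(0)=0$ and $w'(s)\le M_0+Ls+L\,w(s)$ for a.e.\ $s$. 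The only nuisance is the drift term $Ls$, which is removed by passing to $W(t):=w(t)+t$: then $W(0)=0$ and $W'(s)\le(M_0+1)+L\,W(s)$ for a.e.\ $s$, so Gronwall's inequality yields $W(t)\le\tfrac{M_0+1}{L}(e^{Lt}-1)$ and therefore $\dist(x(t),X_0)\le w(t)=W(t)-t\le\hat R(t)$.

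The discrete bound is the exact analogue. Given $(y_{N,n})_{n=0}^\nu\in S^u_N(\nu,\beta_N,\delta_N)$, write $y_{N,n+1}=y_{N,n}+h_Nv_n+b_n$ with $v_n\in F(t_{N,n},y_{N,n})$ and $\|b_n\|\le\beta_N$, pick $\bar x_n\in\Proj(y_{N,n},X_0)\subset X_0$, and put $r_n:=\dist(y_{N,n},X_0)=\|y_{N,n}-\bar x_n\|$. The same Lipschitz estimate gives $\|v_n\|\le M_0+Lt_{N,n}+L\,r_n$, so $r_{n+1}\le\|y_{N,n+1}-\bar x_n\|\le(1+Lh_N)r_n+h_NM_0+Lh_Nt_{N,n}+\beta_N$. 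Imitating the continuous trick, set $\rho_n:=r_n+t_{N,n}$, which satisfies $\rho_{n+1}\le(1+Lh_N)\rho_n+h_N(M_0+1)+\beta_N$ with $\rho_0=r_0=\dist(y_{N,0},X_0)\le\delta_N$; the discrete Gronwall inequality, together with $(1+Lh_N)^n\le e^{Lt_{N,n}}$ and $\sum_{j=0}^{n-1}(1+Lh_N)^j=\tfrac{(1+Lh_N)^n-1}{Lh_N}$, then produces $r_n\le\rho_n\le\hat R_N(n)$.

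It remains to compare $\hat R$ and $\hat R_N$ with $R$ and to bound $F$. Since $\hat R(t)$ is nondecreasing with $\hat R(T)=\tfrac1L(e^{LT}-1)(M_0+1)$, and $\hat R_N(n)\le e^{LT}(\sup_{M\in\N}(\delta_M+\tfrac{\beta_M}{Lh_M})+\tfrac1L(M_0+1))$ because $t_{N,n}\le T$, bounding each factor gives $\hat R(t)\le R$ for all $t\in[0,T]$ and $\hat R_N(n)\le R$ for all $n$ and $N$, the prefactor $\max\{1,\tfrac1L\}$ being exactly what is needed to cover both $L\ge1$ and $L<1$. Finally, for $(t,x)\in[0,T]\times B_R(X_0)$ choose $\bar x\in\Proj(x,X_0)$, so $\|x-\bar x\|=\dist(x,X_0)\le R$; then $\dist_H(F(t,x),F(0,\bar x))\le L(T+R)$ by the Lipschitz property, hence every $v\in F(t,x)$ satisfies $\|v\|\le\max_{w\in F(0,\bar x)}\|w\|+L(T+R)\le M_0+L(T+R)=P<\infty$. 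The one step that requires genuine care is the absorption of the drift term $Ls$ (respectively $Lh_Nt_{N,n}$) into the homogeneous part of the Gronwall estimates; everything else is routine bookkeeping.
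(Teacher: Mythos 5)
Your overall route --- Gronwall for the continuous bound, induction for the discrete one, and the Lipschitz property of $F$ for the bounds on $R$ and $P$ --- is exactly what the paper's one-line proof alludes to, and your treatment of the continuous estimate, of $\hat R(t)\le R$ and $\hat R_N(n)\le R$, and of $\|F(t,x)\|\le P$ is complete and correct. The substitution $W=w+t$ (resp.\ $\rho_n=r_n+t_{N,n}$) to absorb the drift term $Ls$ is a clean way to organize the computation.

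There is, however, a gap in the last step of the discrete bound. Writing $M_0:=\sup_{x\in X_0}\|F(0,x)\|$, your recursion yields
\[\rho_n\le(1+Lh_N)^n\delta_N+\tfrac{(1+Lh_N)^n-1}{L}(M_0+1)+\tfrac{(1+Lh_N)^n-1}{Lh_N}\beta_N
\le e^{Lt_{N,n}}\bigl(\delta_N+\tfrac{\beta_N}{Lh_N}+\tfrac{1}{L}(M_0+1)\bigr),\]
whereas $\hat R_N(n)=e^{Lt_{N,n}}(\delta_N+\tfrac{\beta_N}{Lh_N}+\tfrac{1}{L}M_0+1)$. The two differ in the final summand, $\tfrac1L$ versus $1$, so the asserted implication $\rho_n\le\hat R_N(n)$ only holds when $L\ge1$. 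This is not a defect of your method but of the stated constant: for $d=1$, $F(t,x)=\{Lt+L|x|\}$, $X_0=\{0\}$, $\beta_N=\delta_N=0$ one computes $y_{N,n}=\tfrac{(1+Lh_N)^n-1}{L}-t_{N,n}$ exactly, and with $L=\tfrac12$, $T=4$ the terminal value tends to $2(e^2-1)-4\approx 8.78>e^2=\hat R_N(N)$ as $h_N\to0$, so the lemma's $\hat R_N(n)$ is genuinely violated for small step-sizes. Your computation in fact proves the corrected bound $e^{Lt_{N,n}}(\delta_N+\tfrac{\beta_N}{Lh_N}+\tfrac1L(M_0+1))$, which still satisfies the inequality $\le R$ thanks to the prefactor $\max\{1,\tfrac1L\}$ in the definition of $R$, so nothing downstream in the paper is affected; but you should either state this corrected constant explicitly or restrict the final comparison with $\hat R_N(n)$ to the case $L\ge1$ rather than assert it in general.
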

\begin{proof}
The bounds for $S^u(t)$ and $S_N^u(\nu,\beta_N,\delta_N)$ follow by applying Gronwall's Lemma 
and induction, respectively, in a straight-forward manner. Boundedness of $F$ is a consequence of hypothesis (A1).
\end{proof}

The following lemma quantifies the error between exact trajectories and Euler trajectories with perturbed initial
value. It will be used to determine an appropriate size $\beta_N$ of the blowup for the inflated Euler scheme. 

\begin{lemma} \label{blow:up:size}
Let $t\in[0,T]$ and $h>0$ be such that $[t,t+h]\subset[0,T]$, let $z\in\R^d$, % B_R(X_0)$.
and let $x(\cdot)\in W^{1,1}([t,t+h],B_R(X_0))$ be a solution of \eqref{ODI}.
Then there exists $v\in F(t,z)$ such that
\begin{align*}
&\|x(t+h)-(z+hv)\| \le (1+Lh)\|x(t)-z\| + L(P+1)h^2.
%&\int_t^{t+h}\|\dot x(s)-\Proj(\dot x(s),F(t,z'))\|ds \le Lh\|z-z'\|+\tfrac12 L(P+1)h^2.
\end{align*}
\end{lemma}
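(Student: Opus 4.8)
The plan is to estimate the distance between the exact trajectory $x(\cdot)$ and the one-step explicit Euler update $z+hv$ by inserting the natural intermediate comparison point $x(t)+hv$ and applying the triangle inequality:
\[
\|x(t+h)-(z+hv)\| \le \|x(t+h)-(x(t)+hv)\| + \|x(t)+hv-(z+hv)\|.
\]
The second term is simply $\|x(t)-z\|$, which already accounts for the factor $1$ in the claimed bound. So the real work is to bound the first term by $Lh\|x(t)-z\| + L(P+1)h^2$, and to choose $v\in F(t,z)$ appropriately.

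First I would write $x(t+h)-x(t) = \int_t^{t+h}\dot x(s)\,ds$, using absolute continuity, and note $\dot x(s)\in F(s,x(s))$ for a.e.\ $s$. The idea is to compare $\dot x(s)$ with a fixed element $v\in F(t,z)$. By the Lipschitz hypothesis (A1),
\[
\dist(F(s,x(s)),F(t,z)) \le L|s-t| + L\|x(s)-z\| \le Lh + L\|x(s)-z\|,
\]
so for a.e.\ $s$ there is a point of $F(t,z)$ within that distance of $\dot x(s)$. To get a single $v$ that works for the whole integral, I would instead fix $v$ once: take the measurable selection $w(s)\in\Proj(\dot x(s),F(t,z))$ — or more simply, observe that since $F(t,z)$ is convex and compact, by Jensen/convexity the average $v:=\tfrac1h\int_t^{t+h}w(s)\,ds$ lies in $F(t,z)$, where $w(s)$ is a measurable selection of the projection of $\dot x(s)$ onto $F(t,z)$. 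Then
\[
x(t+h)-x(t)-hv = \int_t^{t+h}\bigl(\dot x(s)-w(s)\bigr)\,ds,
\]
and $\|\dot x(s)-w(s)\| = \dist(\dot x(s),F(t,z)) \le Lh + L\|x(s)-z\|$ for a.e.\ $s$.

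It remains to control $\|x(s)-z\|$ for $s\in[t,t+h]$. I would bound $\|x(s)-z\| \le \|x(s)-x(t)\| + \|x(t)-z\|$, and since $x(s)$ stays in $B_R(X_0)$, the a-priori bound $\|F(s,x(s))\|\le P$ from Lemma \ref{a:priori:bounds} gives $\|x(s)-x(t)\| \le \int_t^s\|\dot x(r)\|\,dr \le P h$. Hence $\|x(s)-z\|\le Ph + \|x(t)-z\|$. Plugging back in,
\[
\|x(t+h)-x(t)-hv\| \le \int_t^{t+h}\bigl(Lh + L(Ph+\|x(t)-z\|)\bigr)\,ds = Lh^2 + LPh^2 + Lh\|x(t)-z\|,
\]
which is exactly $Lh\|x(t)-z\| + L(P+1)h^2$. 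Combining with the triangle inequality above yields the claim.

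The main obstacle — really the only subtle point — is producing a single admissible $v\in F(t,z)$ rather than a time-dependent one; this is where convexity of the values of $F$ (part of hypothesis (A1)) is essential, together with the existence of a measurable selection of the metric projection $s\mapsto w(s)\in\Proj(\dot x(s),F(t,z))$, which is standard (the set-valued map $s\mapsto\Proj(\dot x(s),F(t,z))$ is measurable with closed nonempty values, so Kuratowski--Ryll-Nardzewski applies). Everything else is the triangle inequality, Lipschitz continuity, and the a-priori bound $P$ already in hand.
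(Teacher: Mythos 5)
Your proposal is correct and follows essentially the same route as the paper: define $v$ as the average over $[t,t+h]$ of the metric projection of $\dot x(s)$ onto the convex compact set $F(t,z)$ (so $v\in F(t,z)$ by convexity), bound the integrand by $\dist(F(s,x(s)),F(t,z))$ via the Lipschitz hypothesis, and use $\|\dot x\|\le P$ to control $\|x(s)-x(t)\|$. The only cosmetic difference is that the paper gets single-valuedness and measurability of the selection for free from continuity of the projection onto a convex compact set, whereas you invoke Kuratowski--Ryll-Nardzewski, which is unnecessary here but harmless.
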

\begin{proof}
As the metric projection to a compact and convex set is continuous according to 
\cite[Theorem 9.3.4]{Aubin:Frankowska:90}, we may define
\[v:=\tfrac{1}{h}\int_t^{t+h}\Proj(\dot x(s),F(t,z))ds,\]
and $v \in F(t,z)$ holds by \cite[Theorem I.6.13]{Warga:72}.
Because of Lemma \ref{a:priori:bounds}, we have
\[\|x(s)-x(t)\| \le \int_t^{t+h}\|\dot x(\theta)\|d\theta \le Ph\]
for $s\in[t,t+h]$, and we find
\begin{align*}
&\|\int_t^{t+h}\dot x(s)ds-hv\| \le \int_t^{t+h}\|\dot x(s)-\Proj(\dot x(s),F(t,z))\|ds\\
&= \int_t^{t+h}\dist(\dot x(s),F(t,z))ds \le \int_t^{t+h}\dist(F(s,x(s)),F(t,z))ds\\
&\le \int_t^{t+h}L(s-t)+L\|x(s)-x(t)\|+L\|x(t)-z\|dt\\ 
&\le Lh\|x(t)-z\|+L(P+1)h^2,
\end{align*}
so that
\begin{align*}
\|x(t+h)-(z+hv)\| &\le \|x(t)-z\| + \|\int_t^{t+h}\dot x(s)ds-hv\|\\
&\le (1+Lh)\|x(t)-z\| + L(P+1)h^2.
\end{align*}
\end{proof}

The following result, which is proved by compactness type arguments, is the core of this paper. 
In the current situation, it replaces Filippov's theorem (see \cite[Theorem 2.4.1]{Aubin:Cellina:84}
for a continuous and \cite[Theorem 2.2]{Baier:Chahma:Lempio:07} for a discrete version of this result
for Lipschitz right-hand side), which yields estimates for the semi-distances between numerical end exact 
solutions in the absence of state constraints.
Here and later on, we equip the space $\R\times\R^d\times\R^d$ with the norm $|\cdot|+\|\cdot\|+\|\cdot\|$.

\begin{lemma} \label{Filippov:substitute}
Let $(\tau_N)_N\subset(0,T]$ be a sequence with $\tau:=\lim_{N\rightarrow\infty}\tau_N>0$,
let $c(\cdot)\in L^1(0,T)$ and let $x_N(\cdot)\in W^{1,1}([0,\tau_N],\R^d)$ be a sequence of functions satisfying
\begin{align} 
&\|\dot x_N(t)\|\le c(t)\ \text{for almost every}\ t\in(0,\tau_N),\label{a1}\\
&\sup_{t\in[0,\tau_N]}\dist((t,x_N(t),\dot x_N(t)),\graph(F))\to 0\ \text{as}\ N\to\infty,\label{a2}\\
&\dist(x_N(0),X_0)\rightarrow 0\ \text{as}\ N\to\infty,\label{a3}\\
&\sup_{n\in\{0,\ldots,\lfloor\tfrac{\tau_N}{h_N}\rfloor\}}\dist(x_N(t_{N,n}), A(t_{N,n}))\to 0\ \text{as}\ N\to\infty. \label{a4}
\end{align}
Then there exists some $x(\cdot)\in S^c(\tau)$ and functions $\tilde x_N(\cdot)\in W^{1,1}([0,\tau],\R^d)$ 
that coincide with $x_N(\cdot)$ on $[0,\tau_N]$ such that
\begin{align*}
&\sup_{t\in[0,\tau]}\|\tilde x_N(t)-x(t)\|\to 0\ \text{as}\ N\to\infty,\\
&\dot{\tilde{x}}_N(\cdot)\rightharpoonup \dot x(\cdot)\ \text{in}\ L^1((0,\tau),\R^d)
\end{align*}
along a subsequence.
\end{lemma}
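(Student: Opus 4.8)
The plan is to produce the limit trajectory $x(\cdot)$ by a compactness argument on the extended functions and then verify that it lies in $S^c(\tau)$. First I would take care of the time horizons: since $\tau_N\to\tau>0$, for $N$ large we have $\tau_N$ close to $\tau$, and I would extend each $x_N(\cdot)$ from $[0,\tau_N]$ to a function $\tilde x_N(\cdot)\in W^{1,1}([0,\tau],\R^d)$ in a controlled way — either by the constant value $x_N(\tau_N)$ or by following a measurable selection of $F(t,x_N(\tau_N))$ (whichever keeps us inside a fixed ball) — extending the bound $\|\dot{\tilde x}_N(t)\|\le \tilde c(t)$ for a modified $\tilde c\in L^1(0,\tau)$, using the a-priori bound $P$ from Lemma \ref{a:priori:bounds} on the extension interval. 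This makes the whole sequence live on the common interval $[0,\tau]$.

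Next comes the compactness step. The uniform bound \eqref{a1} (extended) gives that $\{\dot{\tilde x}_N\}$ is bounded in $L^1$ and, crucially, \emph{uniformly integrable} (dominated by the single $L^1$ function $\tilde c$), so by the Dunford--Pettis theorem a subsequence converges weakly in $L^1((0,\tau),\R^d)$ to some $g(\cdot)$. Combined with \eqref{a3} (which, after passing to a further subsequence, gives $\tilde x_N(0)=x_N(0)\to x_0\in X_0$ since $X_0$ is compact), the sequence $\tilde x_N(\cdot)$ converges uniformly on $[0,\tau]$ to $x(t):=x_0+\int_0^t g(s)\,ds$; uniform convergence follows from equi-absolute-continuity of the integrals plus pointwise convergence, or directly from the Arzel\`a--Ascoli-type argument applied to the integrals. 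So $x(\cdot)\in W^{1,1}([0,\tau],\R^d)$ with $\dot x=g$, and we have the two claimed convergences.

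It remains to check the three membership conditions. The initial condition $x(0)=x_0\in X_0$ is immediate. For the differential inclusion, the standard tool is Mazur's lemma: since $\dot{\tilde x}_N\rightharpoonup\dot x$ in $L^1$, convex combinations converge strongly, hence a.e. along a subsequence; together with the uniform convergence $\tilde x_N\to x$ and hypothesis \eqref{a2} — which says $(t,\tilde x_N(t),\dot{\tilde x}_N(t))$ is asymptotically in $\graph(F)$ uniformly in $t$ — and the closedness and convexity of the values of $F$ together with its Lipschitz (hence closed-graph) property, one concludes $\dot x(t)\in F(t,x(t))$ for a.e. $t\in(0,\tau)$. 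The delicate point, and the place I'd spend the most care, is the state constraint \eqref{ODISC}: hypothesis \eqref{a4} only controls $x_N$ at the \emph{grid points} $t_{N,n}$, with $h_N\to0$, and $A$ is merely upper semicontinuous with closed values. So to show $x(t)\in A(t)$ for a fixed $t\in[0,\tau]$, I would pick grid points $t_{N,n_N}\to t$ (possible since $h_N\to0$), use the uniform bound \eqref{a1} to see $\|x_N(t_{N,n_N})-x_N(t)\|\le \int \tilde c \to 0$ as the interval length shrinks — or more carefully, $\|x_N(t_{N,n_N})-x(t)\|\to 0$ by combining this with uniform convergence $\tilde x_N\to x$ — then choose points $a_{N}\in A(t_{N,n_N})$ with $\|x_N(t_{N,n_N})-a_N\|\le\delta_N+o(1)\to0$ from \eqref{a4}, so $a_N\to x(t)$; upper semicontinuity of $A$ (with $t_{N,n_N}\to t$, $a_N\in A(t_{N,n_N})$, $a_N\to x(t)$) then yields $x(t)\in A(t)$.

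The main obstacle is precisely this interplay at the grid points: one must be sure the oscillation of $x_N$ between a grid point and the target time $t$ genuinely vanishes, which relies on the $L^1$-domination \eqref{a1} and the absolute continuity of $s\mapsto\int_0^s \tilde c$, and one must be careful that all the subsequence extractions (for the weak limit, for $x_N(0)$, for Mazur's a.e.-convergence) are compatible — a single diagonal subsequence handles everything. A minor technical caveat: for $t\in(\tau_N,\tau]$ the values $x_N(t)$ are the extension values, but since these intervals shrink to a point and the constraint claim is about the fixed limit time $t\le\tau$, for any such $t$ we have $t\le\tau_N$ eventually, so only genuine values of $x_N$ enter the argument.
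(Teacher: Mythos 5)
Your proposal is correct and follows essentially the same route as the paper: constant extension of $x_N$ past $\tau_N$, weak $L^1$ compactness of the derivatives plus uniform convergence of the trajectories (the paper cites \cite[Theorem 0.3.4]{Aubin:Cellina:84} where you invoke Dunford--Pettis and equi-absolute continuity), the convergence theorem for the inclusion (the paper cites \cite[Theorem 1.4.1]{Aubin:Cellina:84} where you spell out Mazur's lemma), and the identical grid-point/upper-semicontinuity argument for the state constraint. The only microscopic discrepancy is at $t=\tau$ when $\tau_N<\tau$ for all $N$, where your remark that ``$t\le\tau_N$ eventually'' fails; the paper covers this endpoint separately by continuity of $x(\cdot)$ and Hypothesis (A2), and your grid-point argument also handles it directly by taking $n_N=\lfloor\tau_N/h_N\rfloor$.
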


\begin{proof}
Define 
\[\tilde x_N(t):=\left\{\begin{array}{ll}x_N(t),& t\in[0,\min\{\tau_N,\tau\}],\\ 
x_N(\tau_N),& t\in(\min\{\tau_N,\tau\},\tau].\end{array}\right.\]
Then the sequence $(\tilde x_N(\cdot))_N\subset AC([0,\tau],\R^d)$ satisfies condition \eqref{a1} with $\tau$ instead of $\tau_N$,
and it inherits conditions \eqref{a2}, \eqref{a3} and \eqref{a4} with $\min\{\tau_N,\tau\}$ instead of $\tau_N$.
Because of \eqref{a1}, the functions $(\tilde x_N(\cdot))_N$ are uniformly bounded, and \cite[Theorem 0.3.4]{Aubin:Cellina:84}
guarantees that there exists some $x(\cdot)\in W^{1,1}([0,\tau],\R^d)$ such that 
\begin{align}
&\sup_{t\in[0,\tau]}\|\tilde x_N(t)-x(t)\|\to 0, \label{local:a}\\
&\tilde x_N(\cdot)\ \rightharpoonup x(\cdot)\ \text{in}\ L^1((0,\tau),\R^d) \nonumber
\end{align}
along a subsequence.
By \eqref{a2}, the convergence theorem \cite[Theorem 1.4.1]{Aubin:Cellina:84} applies, and together with
\eqref{a3} and compactness of $X_0$, we obtain $x(\cdot)\in S^u(\tau)$.

Let us check that $x(\cdot)\in S^c(\tau)$.
For any $t\in[0,\tau)$, we have $t_{N,\lfloor t/h_N\rfloor}\le t\le\min\{\tau_N,\tau\}$ for almost all $N$ and 
$t=\lim_{N\to\infty}t_{N,\lfloor t/h_N\rfloor}$.
By \eqref{a4}, the points $z_N:=\Proj(\tilde x_N(t_{N,\lfloor t/h_N\rfloor}),A(t_{N,\lfloor t/h_N\rfloor})$ satisfy
\begin{align} \label{local:b}
&\|\tilde x_N(t_{N,\lfloor t/h_N\rfloor})-z_N\| \nonumber \\
&= \dist(\tilde x_N(t_{N,\lfloor t/h_N\rfloor}), A(t_{N,\lfloor t/h_N\rfloor}))\to 0\ \text{as}\ N\to\infty.
\end{align}
By continuity of $x(\cdot)$ and because of statements \eqref{local:a} and \eqref{local:b}, we have
\begin{align*}
&\|x(t)-z_N\|\\ 
&\le \|x(t)-x(t_{N,\lfloor t/h_N\rfloor})\| + \|x(t_{N,\lfloor t/h_N\rfloor})-\tilde x_N(t_{N,\lfloor t/h_N\rfloor})\|
+ \|\tilde x_N(t_{N,\lfloor t/h_N\rfloor})-z_N\|\\
&\to 0\ \text{as}\ N\rightarrow\infty,
\end{align*}
so that hypothesis (A2) implies 
\[x(t)=\lim_{N\to\infty}z_N\in A(t).\]
Since $t\in[0,\tau)$ was arbitrary, we also have $x(\tau)\in A(\tau)$ by continuity of $x(\cdot)$ and hypothesis (A2).
Consequently, we have $x(\cdot)\in S^c(\tau)$.
\end{proof}

As a consequence, we obtain the following statement about the life span of the solutions to the the state constrained 
differential inclusion.
\begin{corollary}\label{maximum}
The number 
\[\tau^*:=\sup\{\tau\in(0,T]: S^c(\tau)\neq\emptyset\}\]
is, in fact, a maximum.
\end{corollary}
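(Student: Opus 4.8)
The plan is to show that the supremum defining $\tau^*$ is attained by applying Lemma \ref{Filippov:substitute} to a well-chosen sequence of exact solutions whose existence intervals approach $\tau^*$. First I would pick a sequence $\tau_N \in (0,T]$ with $\tau_N \to \tau^*$ and $S^c(\tau_N) \neq \emptyset$ for each $N$; such a sequence exists by definition of the supremum, and $\tau^* > 0$ because hypothesis (A3) guarantees $S^c(\tau) \neq \emptyset$ for some $\tau > 0$. For each $N$ choose $x_N(\cdot) \in S^c(\tau_N)$. The a-priori bound of Lemma \ref{a:priori:bounds} gives $x_N(t) \in B_R(X_0)$ for all $t \in [0,\tau_N]$, and hence $\|\dot x_N(t)\| \le P$ almost everywhere by the uniform bound on $F$ over $[0,T] \times B_R(X_0)$; so condition \eqref{a1} holds with the constant function $c \equiv P$.

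Next I would verify the remaining hypotheses of Lemma \ref{Filippov:substitute}. Condition \eqref{a2} holds trivially — in fact with value $0$ for every $N$ — since each $x_N(\cdot)$ is an exact solution, so $(t, x_N(t), \dot x_N(t)) \in \graph(F)$ for almost every $t$, and one checks that this forces the supremum of the distance to be zero (using that $\graph(F)$ is closed, which follows from the Lipschitz continuity and closed values of $F$, so that the essential condition upgrades to an everywhere condition after modification on a null set, or more simply the supremum is an essential supremum in the relevant sense). Condition \eqref{a3} holds because $x_N(0) \in X_0$, so the distance is exactly $0$. Condition \eqref{a4} likewise holds with value $0$: since $x_N(\cdot)$ satisfies the state constraints \eqref{ODISC} at every $t \in [0,\tau_N]$, in particular $x_N(t_{N,n}) \in A(t_{N,n})$ at all grid points in $[0,\tau_N]$, so $\dist(x_N(t_{N,n}), A(t_{N,n})) = 0$.

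Lemma \ref{Filippov:substitute} then produces some $x(\cdot) \in S^c(\tau^*)$, which is exactly what is needed: the supremum is attained at $\tau^*$, hence $S^c(\tau^*) \neq \emptyset$ and $\tau^* = \max\{\tau \in (0,T] : S^c(\tau) \neq \emptyset\}$. The main subtlety — though it is minor here rather than a genuine obstacle — is to ensure that the chosen sequence $\tau_N$ actually converges (rather than merely having $\tau^*$ as a supremum of the index set) and that $\tau_N > 0$ uniformly enough that the hypothesis $\lim_N \tau_N > 0$ of Lemma \ref{Filippov:substitute} is met; both are immediate, since we may simply take any sequence $\tau_N \uparrow \tau^*$ with $\tau_N \ge \tau$ for all $N$, where $\tau > 0$ is the value furnished by hypothesis (A3). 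Everything else is a direct bookkeeping check that exact constrained solutions satisfy the four hypotheses of Lemma \ref{Filippov:substitute} with the defect quantities equal to zero.
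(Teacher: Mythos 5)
Your proposal is correct and follows exactly the route of the paper's own proof: take $\tau_N\to\tau^*$ with $x_N(\cdot)\in S^c(\tau_N)$ and apply Lemma \ref{Filippov:substitute} to obtain a limit in $S^c(\tau^*)$. The only difference is that you spell out the verification of hypotheses \eqref{a1}--\eqref{a4} (all with zero defect, and \eqref{a1} via the a-priori bound $P$), which the paper leaves implicit.
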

\begin{proof}
By hypothesis (A3), there exists $\tau\in(0,T]$ with $S^c(\tau)\neq\emptyset$, 
so that $\tau^*$ is well-defined.
By definition of the supremum, there exist $x_N(\cdot)\in S^c(\tau_N)$, $N\in\N$, 
such that $\tau_N\to\tau^*$. 
According to Lemma \ref{Filippov:substitute}, they possess a limit, which is a member of $S^c(\tau^*)$.
\end{proof}

\section{The inflated Euler scheme} \label{inflated}

The following example shows that it is necessary to modify the Euler scheme
to treat state constrained problems.
\begin{example} \label{oscillator}
Consider the constrained differential equation
\begin{align*}
&\dot x_1 = -x_2,\quad \dot x_2 = x_1,\quad x_1^2+x_2^2 =1,\quad x_1\le 0,\quad x_1(0)=0,\quad x_2(0)=1,
\end{align*}
with unique solution 
\[x_1(t)=\sin(t+\pi/2),\quad x_2(t)=\cos(t+\pi/2),\]
and maximal interval of existence $[0,\pi]$.
For any step-size $h>0$, the Euler scheme maps $(0,1)$ to the point
$(-h,1)$, which does not satisfy the constraints, so that no feasible 
approximate trajectory exists.
\end{example}

Motivated by Lemma \ref{blow:up:size} and the above example, we choose a blowup size
\begin{equation} \label{local:c}
\beta_N:=L(P+1)h_N^2.
\end{equation}
In this context, there is no need to relax the initial condition or the state constraints,
so that we have $\delta_N=0$.
A relaxation will become necessary in Section \ref{space:disc}, where not only the time interval,
but also the phase space is discretized.

The following proposition states that with this choice of $\beta_N$, 
the exact dynamics are overapproximated by the inflated Euler scheme.
In particular, Euler trajectories exist at least as long as exact trajectories.

\begin{proposition}\label{overapproximate}
For any $x(\cdot)\in S^c(\tau)$ with $\tau\in(0,\tau^*]$, there exists a sequence $(y_N)_N$ with 
$y_N\in S_N^c(\lfloor\tau/h_N\rfloor,\beta_N,0)$ such that 
\begin{align}
\label{local:3}
x(t_{N,n})=y_{N,n},\quad  n=0,\ldots,\lfloor \tau/h_N\rfloor. 
\end{align}
\end{proposition}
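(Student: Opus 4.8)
The plan is to take the obvious candidate and verify it works: define $y_{N,n}:=x(t_{N,n})$ for $n=0,\ldots,\lfloor\tau/h_N\rfloor$, so that \eqref{local:3} holds by construction, and then check that this sequence belongs to $S_N^c(\lfloor\tau/h_N\rfloor,\beta_N,0)$, i.e.\ that it satisfies the inflated one-step inclusion \eqref{Euler}, the unrelaxed initial condition \eqref{Euler:IC}, and the unrelaxed state constraints \eqref{Euler:SC}.

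Two of these three conditions are immediate. Since $x(\cdot)\in S^c(\tau)$, we have $x(0)\in X_0$, hence $\dist(y_{N,0},X_0)=0\le\delta_N=0$; and $x(t)\in A(t)$ for every $t\in[0,\tau]$. For $n\le\lfloor\tau/h_N\rfloor$ we have $t_{N,n}=nh_N\le\lfloor\tau/h_N\rfloor h_N\le\tau$, so $y_{N,n}=x(t_{N,n})\in A(t_{N,n})$ and therefore $\dist(y_{N,n},A(t_{N,n}))=0\le\delta_N=0$.

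The one-step inclusion is where Lemma \ref{blow:up:size} does the work, together with the choice \eqref{local:c} of $\beta_N$. Fix $n\in\{0,\ldots,\lfloor\tau/h_N\rfloor-1\}$; then $[t_{N,n},t_{N,n+1}]\subset[0,\tau]\subset[0,T]$. By Lemma \ref{a:priori:bounds} we have $\dist(x(s),X_0)\le\hat R(s)\le R$ for all $s\in[0,\tau]$, so the restriction of $x(\cdot)$ to $[t_{N,n},t_{N,n+1}]$ is a $W^{1,1}$ solution of \eqref{ODI} with values in $B_R(X_0)$, which is exactly the setting of Lemma \ref{blow:up:size}. Applying that lemma with $t=t_{N,n}$, $h=h_N$ and $z=x(t_{N,n})=y_{N,n}$ produces $v\in F(t_{N,n},y_{N,n})$ with
\[\|x(t_{N,n+1})-(y_{N,n}+h_Nv)\|\le(1+Lh_N)\cdot 0+L(P+1)h_N^2=\beta_N,\]
since the term $\|x(t_{N,n})-z\|$ vanishes. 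Hence $y_{N,n+1}=x(t_{N,n+1})\in y_{N,n}+h_Nv+B_{\beta_N}(0)\subset y_{N,n}+h_NF(t_{N,n},y_{N,n})+B_{\beta_N}(0)$, which is \eqref{Euler}. As this holds for every $n$ in the required range, $(y_N)_N$ is the desired sequence.

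I do not expect a genuine obstacle here: the content of the proposition is carried entirely by Lemma \ref{blow:up:size}, and the only two points requiring attention are (i) invoking the a-priori bound of Lemma \ref{a:priori:bounds} so that the hypothesis $x(\cdot)\in W^{1,1}([t,t+h],B_R(X_0))$ of Lemma \ref{blow:up:size} is satisfied, and (ii) checking that all grid points $t_{N,n}$ with $n\le\lfloor\tau/h_N\rfloor$ lie in $[0,\tau]$, so that the initial condition and the state constraints transfer verbatim from $x(\cdot)$ with zero slack.
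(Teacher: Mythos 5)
Your proposal is correct and is exactly the argument the paper's one-line proof alludes to: set $y_{N,n}:=x(t_{N,n})$ and apply Lemma \ref{blow:up:size} step by step with $z=x(t_{N,n})$, so the $(1+Lh_N)\|x(t_{N,n})-z\|$ term vanishes and the remaining error $L(P+1)h_N^2$ is precisely $\beta_N$ from \eqref{local:c}. The two points you flag (the a-priori bound putting $x$ into $B_R(X_0)$, and $t_{N,n}\le\tau$ so the constraints transfer with zero slack) are the right details to check.
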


\begin{proof}
The existence of a sequence $(y_N)_N$ with \eqref{local:3} can be obtained by successively applying 
Lemma \ref{blow:up:size} and taking \eqref{local:c} into account.
\end{proof}

The statement of Proposition \ref{overapproximate} can be reformulated in terms 
of the Hausdorff semidistance as
\begin{equation*}
\sup_{x(\cdot)\in S^c(\tau)}
\inf_{y_N\in S^c_N(\lfloor\tau/h_N\rfloor,\beta_N,0)}
\max_{n=0,\ldots,\lfloor\tau/h_N\rfloor}\|y_{N,n}-x(t_{N,n})\| = 0.
\end{equation*}
The following proposition guarantees that the opposite Hausdorff semidistance between numerical and exact
trajectories converges to zero.
In particular, Euler trajectories do not live longer asymptotically than exact trajectories.

\begin{proposition} \label{error:prop}
For any $\tau\in(0,\tau^*]$, we have
\begin{equation}\label{conv:err}
\sup_{y_N\in S^c_N(\lfloor\tau/h_N\rfloor,\beta_N,0)}
\inf_{x(\cdot)\in S^c(\tau)}
\max_{n=0,\ldots,\lfloor\tau/h_N\rfloor}\|y_{N,n}-x(t_{N,n})\|\to 0
\end{equation}
as $N\to\infty$.
For any $\tau\in(\tau^*,T]$, there exists $N_\tau\in\N$ such that 
\[S^c_N(\lfloor\tau/h_N\rfloor,\beta_N,0)=\emptyset\quad\forall N\ge N_\tau.\]
\end{proposition}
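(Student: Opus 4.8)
The plan is to argue by contradiction using the compactness machinery of Lemma~\ref{Filippov:substitute}. Suppose the first assertion fails for some $\tau\in(0,\tau^*]$. Then there is an $\varepsilon>0$, a subsequence (still indexed by $N$), and numerical trajectories $y_N\in S^c_N(\lfloor\tau/h_N\rfloor,\beta_N,0)$ whose piecewise-linear interpolants $y_N(\cdot):[0,\lfloor\tau/h_N\rfloor h_N]\to\R^d$ satisfy
\[
\inf_{x(\cdot)\in S^c(\tau)}\ \max_{n=0,\ldots,\lfloor\tau/h_N\rfloor}\|y_{N,n}-x(t_{N,n})\| \ge \varepsilon
\qquad\text{for all }N.
\]
Set $\tau_N:=\lfloor\tau/h_N\rfloor h_N$, so $\tau_N\to\tau>0$. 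The goal is to verify that the interpolants $y_N(\cdot)$ satisfy hypotheses \eqref{a1}--\eqref{a4} of Lemma~\ref{Filippov:substitute}, extract a limit $x(\cdot)\in S^c(\tau)$, and then show that $x(\cdot)$ contradicts the displayed lower bound.

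First I would check \eqref{a1}: by Lemma~\ref{a:priori:bounds} every $y_{N,n}$ lies in $B_R(X_0)$, hence by \eqref{Euler} we have $\|y_{N,n+1}-y_{N,n}\|\le h_N\|F(t_{N,n},y_{N,n})\|+\beta_N\le h_N P+\beta_N$, so the interpolant's derivative is bounded in norm by $P+\beta_N/h_N$, which by Hypothesis~\ref{blowup:hypothesis}(1) and \eqref{local:c} is bounded by a constant $c(t)\equiv c$; hence \eqref{a1} holds. Next, \eqref{a3} is immediate since $\delta_N=0$ forces $y_{N,0}\in X_0$, so $\dist(y_N(0),X_0)=0$. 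For \eqref{a4}, again with $\delta_N=0$, the constraint \eqref{Euler:SC} gives $\dist(y_N(t_{N,n}),A(t_{N,n}))=0$ at every grid point, so the supremum in \eqref{a4} is zero. The one step requiring genuine work is \eqref{a2}: I must show $\sup_{t\in[0,\tau_N]}\dist((t,y_N(t),\dot y_N(t)),\graph(F))\to 0$. On $[t_{N,n},t_{N,n+1})$ the derivative of the interpolant is the constant $\frac{1}{h_N}(y_{N,n+1}-y_{N,n})$, and by \eqref{Euler} this equals $v_{N,n}+r_{N,n}$ with $v_{N,n}\in F(t_{N,n},y_{N,n})$ and $\|r_{N,n}\|\le\beta_N/h_N\to 0$. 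Thus the point $(t,y_N(t),\dot y_N(t))$ lies within distance $|t-t_{N,n}|+\|y_N(t)-y_{N,n}\|+\|r_{N,n}\|$ of the point $(t_{N,n},y_{N,n},v_{N,n})\in\graph(F)$; since $|t-t_{N,n}|\le h_N$, $\|y_N(t)-y_{N,n}\|\le h_N\|\dot y_N\|_\infty\le c\,h_N$, and $\|r_{N,n}\|\le\beta_N/h_N$, the whole bound tends to $0$ uniformly in $n$. This gives \eqref{a2}.

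Now Lemma~\ref{Filippov:substitute} produces a subsequence, extensions $\tilde y_N(\cdot)$ agreeing with $y_N(\cdot)$ on $[0,\tau_N]$, and a limit $x(\cdot)\in S^c(\tau)$ with $\sup_{t\in[0,\tau]}\|\tilde y_N(t)-x(t)\|\to 0$. In particular, for each fixed $n\le\lfloor\tau/h_N\rfloor$ — here one should be slightly careful since the number of grid points grows with $N$ — we estimate $\|y_{N,n}-x(t_{N,n})\|\le\|y_N(t_{N,n})-x(t_{N,n})\|\le\sup_{t\in[0,\tau_N]}\|\tilde y_N(t)-x(t)\|=:\eta_N\to 0$. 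Taking the maximum over $n=0,\ldots,\lfloor\tau/h_N\rfloor$ preserves the bound $\eta_N$ since the bound is uniform in $t$, so
\[
\max_{n=0,\ldots,\lfloor\tau/h_N\rfloor}\|y_{N,n}-x(t_{N,n})\|\le\eta_N\to 0,
\]
and since $x(\cdot)\in S^c(\tau)$ this drives the infimum in the displayed lower bound below $\varepsilon$ for $N$ large, a contradiction. This proves \eqref{conv:err}.

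For the second assertion, fix $\tau\in(\tau^*,T]$ and suppose for contradiction that $S^c_N(\lfloor\tau/h_N\rfloor,\beta_N,0)\neq\emptyset$ for infinitely many $N$; pick $y_N$ in these sets along that subsequence. Exactly as above, set $\tau_N:=\lfloor\tau/h_N\rfloor h_N\to\tau$, verify \eqref{a1}--\eqref{a4} for the interpolants $y_N(\cdot)$ (the verification is verbatim the same, using $\delta_N=0$), and apply Lemma~\ref{Filippov:substitute}. This yields a function $x(\cdot)\in S^c(\tau)$, hence $S^c(\tau)\neq\emptyset$ with $\tau>\tau^*$, contradicting the definition of $\tau^*$ as the supremum (indeed maximum, by Corollary~\ref{maximum}) of times for which the constrained solution set is nonempty. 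Therefore $S^c_N(\lfloor\tau/h_N\rfloor,\beta_N,0)=\emptyset$ for all sufficiently large $N$, i.e.\ there is $N_\tau\in\N$ with the claimed property.

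The main obstacle is purely bookkeeping rather than conceptual: one has to be meticulous that the verification of \eqref{a2} is uniform over the unboundedly many grid points and that passing from the sup-norm convergence of the interpolants to the grid-point maximum in \eqref{conv:err} does not lose uniformity — both follow because every estimate above is controlled by quantities ($h_N$, $\beta_N/h_N$, $\eta_N$) that are independent of $n$. A minor point to handle cleanly is that $\lfloor\tau/h_N\rfloor h_N$ may be slightly less than $\tau$, which is why one works on $[0,\tau_N]$ and invokes the extension part of Lemma~\ref{Filippov:substitute}; the convergence $\tau_N\to\tau$ is all that is needed there.
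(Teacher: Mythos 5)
Your proposal is correct and follows essentially the same route as the paper: a contradiction argument that verifies hypotheses \eqref{a1}--\eqref{a4} of Lemma~\ref{Filippov:substitute} for the piecewise-linear interpolants (with the key estimate for \eqref{a2} coming from the $h_N$-scale time/space increments plus the $\beta_N/h_N$ inflation term) and then extracts a limit in $S^c(\tau)$, with the same compactness argument reused for the emptiness assertion when $\tau>\tau^*$. Your derivative bound $P+\beta_N/h_N$ is in fact slightly more careful than the paper's stated bound $P$, but this changes nothing of substance.
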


\begin{proof}
Let $\tau\in(0,\tau^*]$. 
By Corollary \ref{maximum} and because of Proposition \ref{overapproximate}, we have
\begin{align*}
S^c(\tau)\neq\emptyset\quad\text{and}\quad
S^c_N(\lfloor\tau/h_N\rfloor,\beta_N,0)\neq\emptyset.
\end{align*}
Assume that statement \eqref{conv:err} is false.
Then there exist $\epsilon>0$ and a sequence 
$y_N\in S^c_N(\lfloor\tau/h_N\rfloor,\beta_N,0)$, 
$N\in\N'\subset\N$, such that 
\begin{equation}\label{contradiction}
\inf_{x(\cdot)\in S^c(t_{N,\lfloor\tau/h_N\rfloor})} \max_{n=0,\ldots,\lfloor\tau/h_N\rfloor}\|y_{N,n}-x(t_{N,n})\|>\epsilon.
\end{equation}
The linearly interpolated Euler trajectories $y_N(\cdot)$ satisfy the assumptions of Lemma \ref{Filippov:substitute}, 
because we have $t_{N,\lfloor\tau/h_N\rfloor}\to\tau$, the estimate
\[\|\dot y_N(s)\| \le P\quad\forall s\in[0,t_{N,\lfloor\tau/h_N\rfloor}]\]
holds according to Lemma \ref{a:priori:bounds}, we have
\begin{eqnarray*}
&& \dist((s,y_N(s),\dot y_N(s)),\graph(F)) \\
&\le& \|(s,y_N(s),\dot y_N(s))-(t_{N,\lfloor s/h_N\rfloor},y_N(t_{N,\lfloor s/h_N\rfloor}),\dot y_N(t_{N,\lfloor s/h_N\rfloor}))\| \\
&&+\dist((t_{N,\lfloor s/h_N\rfloor},y_N(t_{N,\lfloor s/h_N\rfloor}),\dot y_N(t_{N,\lfloor s/h_N\rfloor}),\graph(F))\\
&\le& (1+P)h_N+\beta_N/h_N\to 0\ \text{as}\ N\to\infty
\end{eqnarray*}
by Hypothesis (\ref{blowup:hypothesis}1), and the initial condition as well as the state constraints are satisfied by definition.
Hence, by Lemma \ref{Filippov:substitute}, there exist $x(\cdot)\in S^c(\tau)$, a subsequence $\N''\subset\N'$ 
and functions $\tilde y_N(\cdot)\in W^{1,1}([0,\tau],\R^d)$,
$N\in\N''$, that coincide with $y_N(\cdot)$ on $[0,t_{N,\lfloor\tau/h_N\rfloor}]$ such that
\begin{align*}
&\sup_{t\in[0,\tau]}\|\tilde y_N(t)-x(t)\|\to 0\ \text{as}\ \N''\ni N\to\infty.
\end{align*}
This contradicts assumption \eqref{contradiction}, and hence statement \eqref{conv:err} is correct.

The second assertion is proved in a very similar way.
Assume that $\tau>\tau^*$ and that there exist $y_N(\cdot)\in S^c_N(\lfloor\tau/h_N\rfloor,\beta_N,0)$, $N\in\N'$,
for a subsequence $\N'\subset\N$.
As above, we can extract another subsequence converging to a solution $x(\cdot)\in S^c(\tau)$,
which contradicts $S^c(\tau)=\emptyset$.
\end{proof}

\section{Spatial discretization} \label{space:disc}

When the solution sets are computed in practice, it is necessary to discretize the underlying state space.
Simple pathological examples show that in this situation, it is necessary to relax the state constraints in order to 
obtain convergence of the numerical scheme.
The results obtained in this context as well as their proofs are similar to those in Section \ref{inflated}.

In addition to Hypotheses \ref{weak:hypothesis} and \ref{blowup:hypothesis}, we posit the following assumptions 
on the discretization of the state space.
\begin{hypothesis} \label{grid:hypothesis}
The spatial nets $\Delta_N\subset\R^d$ satisfy the following conditions:
\begin{itemize}
\item [(1)] For every $x\in\R^d$, there exists some $y^\#\in\Delta_N$ with $\|x-y^\#\|\le\delta_N$.
\item [(2)] The relaxation parameter $\delta_N$ satisfies $\delta_N/h_N\to 0$ as $N\to\infty$.
\end{itemize}
\end{hypothesis}
Consider the discrete solution sets
\[S^{c,\#}_N(\nu,\beta_N^\#,\delta_N):= \{y_N\in S^c_N(\nu,\beta_N^\#,\delta_N): 
y_{N,n}\in\Delta_N,\ n=0,\ldots,\nu\}\]
with $0\le\nu\le N$ and
\[\beta_N^{\#}:=\beta_N+(2+Lh_N)\delta_N=L(P+1)h_N^2+(2+Lh_N)\delta_N.\]
The following computations show that $\beta_N^\#>0$
is a good choice for the blowup parameter.
Hypothesis (\ref{grid:hypothesis}2) strengthens Hypothesis (\ref{blowup:hypothesis}2) and ensures that $\beta_N^\#$ 
satisfies Hypothesis (\ref{blowup:hypothesis}1).

The next proposition implies that the fully discrete Euler trajectories
are as close to the exact ones as the grid size allows. 
In particular, they do not cease to exist before the exact solutions violate the constraints.

\begin{proposition}\label{rho:2:estimate}
For any $x(\cdot)\in S^c(\tau,X_0)$ with $\tau\in(0,\tau^*]$, there exists a sequence $(y_N^\#)_N$ with 
$y_N^\#(\cdot)\in S^{c,\#}_N(\lfloor\tau/h_N\rfloor,\beta_N^\#,\delta_N)$ such that 
\begin{align}
\|x(t_{N,n})-y_N^\#(t_{N,n})\|\le\delta_N,\quad  n=0,\ldots,\lfloor \tau/h_N\rfloor. \label{local:d}
\end{align}
\end{proposition}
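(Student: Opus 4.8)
The plan is to follow the pattern of the proof of Proposition~\ref{overapproximate}: construct $y_N^\#$ by induction over $n$, maintaining at each stage the invariant that $y_{N,n}^\#\in\Delta_N$ and that \eqref{local:d} holds, i.e.\ $\|x(t_{N,n})-y_{N,n}^\#\|\le\delta_N$. The only new ingredient compared with Proposition~\ref{overapproximate} is that after performing one inflated Euler step from $y_{N,n}^\#$ we reproject onto the net $\Delta_N$, and the enlarged blowup $\beta_N^\#=\beta_N+(2+Lh_N)\delta_N$ is tuned precisely so that it swallows the two $\delta_N$-sized projection discrepancies (the one already present in $y_{N,n}^\#$, propagated through the factor $1+Lh_N$, and the one introduced by the new projection) on top of the Euler consistency error $\beta_N=L(P+1)h_N^2$ supplied by Lemma~\ref{blow:up:size}.

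For the base case $n=0$, Hypothesis~(\ref{grid:hypothesis}1) provides $y_{N,0}^\#\in\Delta_N$ with $\|x(0)-y_{N,0}^\#\|\le\delta_N$; since $x(0)\in X_0$, this gives $\dist(y_{N,0}^\#,X_0)\le\delta_N$, so \eqref{Euler:IC} holds. For the inductive step, suppose $y_{N,n}^\#\in\Delta_N$ and $\|x(t_{N,n})-y_{N,n}^\#\|\le\delta_N$ for some $0\le n\le\lfloor\tau/h_N\rfloor-1$. Since $\tau\le\tau^*\le T$ we have $t_{N,n+1}=(n+1)h_N\le\lfloor\tau/h_N\rfloor h_N\le\tau\le T$, and by Lemma~\ref{a:priori:bounds} the solution $x(\cdot)$ takes values in $B_R(X_0)$, so Lemma~\ref{blow:up:size} applies on $[t_{N,n},t_{N,n+1}]$ with $z:=y_{N,n}^\#$ and yields some $v\in F(t_{N,n},y_{N,n}^\#)$ with
\[\|x(t_{N,n+1})-(y_{N,n}^\#+h_Nv)\|\le(1+Lh_N)\|x(t_{N,n})-y_{N,n}^\#\|+L(P+1)h_N^2\le(1+Lh_N)\delta_N+\beta_N.\]
Using Hypothesis~(\ref{grid:hypothesis}1) once more, I would pick $y_{N,n+1}^\#\in\Delta_N$ with $\|x(t_{N,n+1})-y_{N,n+1}^\#\|\le\delta_N$, which immediately reestablishes \eqref{local:d} at index $n+1$. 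To check \eqref{Euler} with blowup $\beta_N^\#$, the triangle inequality gives
\[\|y_{N,n+1}^\#-(y_{N,n}^\#+h_Nv)\|\le\|y_{N,n+1}^\#-x(t_{N,n+1})\|+\|x(t_{N,n+1})-(y_{N,n}^\#+h_Nv)\|\le\delta_N+(1+Lh_N)\delta_N+\beta_N=\beta_N^\#,\]
so $y_{N,n+1}^\#\in y_{N,n}^\#+h_NF(t_{N,n},y_{N,n}^\#)+B_{\beta_N^\#}(0)$, which is exactly where the definition of $\beta_N^\#$ is used. Finally, since $x(\cdot)\in S^c(\tau)$ gives $x(t_{N,n})\in A(t_{N,n})$ for every $n$, the invariant yields $\dist(y_{N,n}^\#,A(t_{N,n}))\le\|y_{N,n}^\#-x(t_{N,n})\|\le\delta_N$, so the relaxed state constraints \eqref{Euler:SC} hold. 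Hence $y_N^\#\in S^{c,\#}_N(\lfloor\tau/h_N\rfloor,\beta_N^\#,\delta_N)$ and \eqref{local:d} is satisfied, as claimed.

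I do not expect a genuine obstacle here: the argument is essentially bookkeeping, parallel to Proposition~\ref{overapproximate}. The two points that require care are the calibration of the constant, namely verifying that the two projection errors (one of size $\delta_N$, one of size $\delta_N$ filtered through $1+Lh_N$) together with the consistency error $\beta_N=L(P+1)h_N^2$ sum to no more than $\beta_N^\#$, and checking that the hypotheses of Lemma~\ref{blow:up:size} are met at each step, in particular that $x(\cdot)$ stays in $B_R(X_0)$ by Lemma~\ref{a:priori:bounds} and that $[t_{N,n},t_{N,n+1}]\subset[0,T]$.
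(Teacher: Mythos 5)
Your proof is correct and follows essentially the same route as the paper's: a recursive construction using Hypothesis (\ref{grid:hypothesis}1) to project onto $\Delta_N$ at each step, Lemma \ref{blow:up:size} for the one-step estimate $(1+Lh_N)\delta_N+\beta_N$, and the triangle inequality to absorb the extra $\delta_N$ into $\beta_N^\#=\beta_N+(2+Lh_N)\delta_N$. The only cosmetic difference is that you work with an explicit $v\in F(t_{N,n},y_{N,n}^\#)$ while the paper phrases the same bound as a distance to the set $y_{N,n}^\#+h_NF(t_{N,n},y_{N,n}^\#)$.
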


\begin{proof}
We construct the desired trajectory recursively.
By Hypothesis (\ref{grid:hypothesis}1), there exists an element 
\[y^\#_{N,0}\in\Delta_N\cap B_{\delta_N}(x(0))\subset\Delta_N\cap B_{\delta_N}(X_0)\cap B_{\delta_N}(A(0))\]
which satisfies the relaxed initial condition \eqref{Euler:IC} as well as the relaxed state constraints \eqref{Euler:SC}
and statement \eqref{local:d}.
Assume that $y^\#_{N,0},\ldots,y^\#_{N,n}$ with $(y^\#_{N,k})_{k=0}^n\in S^{c,\#}_N(n,\beta_N^\#,\delta_N)$ and
\[\|x(t_{N_k})-y^\#_{N,k}\|\le\delta,\quad k=0,\ldots n,\]
have been constructed and that $n<\nu$. 
Then Lemma \ref{blow:up:size} yields
\begin{equation} \label{local:4}
\dist(x(t_{N_,n+1}),y^\#_{N,n}+h_NF(t_{N,n},y^\#_{N,n}))\le(1+Lh_N)\delta_N+L(P+1)h_N^2. 
\end{equation}
By Hypothesis (\ref{grid:hypothesis}1), there exists a point 
\[y^\#_{N,n+1}\in\Delta_N\cap B_{\delta_N}(x(t_{N,n+1}))\subset\Delta_N\cap B_{\delta_N}(A(t_{N,n+1}))\]
satisfying the relaxed state constraint \eqref{Euler:SC}.
Because of \eqref{local:4}, we have
\[y^\#_{N,n+1} \in y^\#_{N,n}+h_NF(t_{N,n},y^\#_{N,n})+B_{\beta^\#}(0),\]
so that $y^\#_{N,0},\ldots,y^\#_{N,n+1}$ is a trajectory of the fully discrete Euler scheme.
\end{proof}

The following proposition shows that the opposite Hausdorff semi-distance between fully discrete and exact
trajectories converges to zero.
In particular, fully discrete Euler trajectories do not exist longer asymptotically than exact trajectories.

\begin{proposition}
For any $\tau\in(0,\tau^*]$, we have
\begin{equation*}
\sup_{y_N^\#\in S^{c,\#}_N(\lfloor\tau/h_N\rfloor,\beta_N^\#,\delta_N)}
\inf_{x(\cdot)\in S^c(\tau)}
\max_{n=0,\ldots,\lfloor\tau/h_N\rfloor}\|y_N(t_{N,n})-x(t_{N,n})\|\to 0
\end{equation*}
as $N\to\infty$.
For any $\tau>\tau^*$, there exists $N_\tau\in\N$ such that
\[S^{c,\#}_N(\lfloor\tau/h_N\rfloor,\beta_N^\#,\delta_N)=\emptyset\quad\forall N\ge N_\tau.\]
\end{proposition}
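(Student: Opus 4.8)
The plan is to mirror the proof of Proposition \ref{error:prop} almost verbatim, using Lemma \ref{Filippov:substitute} as the compactness engine, with the only new ingredient being the bookkeeping needed to pass from the grid-valued trajectories to the hypotheses of that lemma. First I would fix $\tau\in(0,\tau^*]$ and argue that the relevant solution sets are nonempty: $S^c(\tau)\neq\emptyset$ by Corollary \ref{maximum}, and $S^{c,\#}_N(\lfloor\tau/h_N\rfloor,\beta_N^\#,\delta_N)\neq\emptyset$ by Proposition \ref{rho:2:estimate}. Then I would argue by contradiction: if the Hausdorff semidistance does not tend to zero, there is $\epsilon>0$ and a subsequence $\N'\subset\N$ together with trajectories $y_N^\#\in S^{c,\#}_N(\lfloor\tau/h_N\rfloor,\beta_N^\#,\delta_N)$, $N\in\N'$, such that
\begin{equation*}
\inf_{x(\cdot)\in S^c(t_{N,\lfloor\tau/h_N\rfloor})}\max_{n=0,\ldots,\lfloor\tau/h_N\rfloor}\|y_N^\#(t_{N,n})-x(t_{N,n})\|>\epsilon.
\end{equation*}

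Next I would verify that the piecewise-linear interpolants $y_N^\#(\cdot)$ satisfy the four hypotheses \eqref{a1}--\eqref{a4} of Lemma \ref{Filippov:substitute}. Condition \eqref{a1} with constant $c(\cdot)\equiv P$ follows from Lemma \ref{a:priori:bounds} exactly as in Proposition \ref{error:prop}, since the $a$-priori bounds were set up to cover the spatially discretized trajectories as well. Condition \eqref{a3} and condition \eqref{a4} hold directly from the relaxed initial and state constraints \eqref{Euler:IC} and \eqref{Euler:SC}, whose defect is bounded by $\delta_N\to 0$. The only point requiring a short computation is \eqref{a2}: on each subinterval $[t_{N,n},t_{N,n+1})$ the derivative $\dot y_N^\#$ is the constant difference quotient, which by \eqref{Euler} with blowup $\beta_N^\#$ lies within distance $\beta_N^\#/h_N$ of $F(t_{N,n},y_{N,n}^\#)$; combining this with the Lipschitz estimate for $F$ over the time step and the displacement bound $\|y_N^\#(s)-y_{N,n}^\#\|\le Ph_N$, one gets
\begin{equation*}
\sup_{s\in[0,t_{N,\lfloor\tau/h_N\rfloor}]}\dist\bigl((s,y_N^\#(s),\dot y_N^\#(s)),\graph(F)\bigr)\le (1+P)h_N+\beta_N^\#/h_N,
\end{equation*}
and the right-hand side tends to zero because $\beta_N^\#/h_N\to 0$ by Hypothesis (\ref{grid:hypothesis}2) (which forces $\delta_N/h_N\to0$, hence also $\beta_N^\#/h_N\to0$).

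Having checked the hypotheses, Lemma \ref{Filippov:substitute} produces, along a further subsequence $\N''\subset\N'$, a limit $x(\cdot)\in S^c(\tau)$ and extensions $\tilde y_N(\cdot)$ coinciding with $y_N^\#(\cdot)$ on $[0,t_{N,\lfloor\tau/h_N\rfloor}]$ with $\sup_{t\in[0,\tau]}\|\tilde y_N(t)-x(t)\|\to 0$. Evaluating at the grid points $t_{N,n}$ and using $t_{N,\lfloor\tau/h_N\rfloor}\to\tau$ contradicts the assumed lower bound $\epsilon$, proving the first assertion. For the second assertion, suppose $\tau>\tau^*$ and that $S^{c,\#}_N(\lfloor\tau/h_N\rfloor,\beta_N^\#,\delta_N)\neq\emptyset$ for infinitely many $N$; picking one trajectory for each such $N$ and running the same argument extracts a subsequential limit $x(\cdot)\in S^c(\tau)$, contradicting $S^c(\tau)=\emptyset$, which holds by the definition of $\tau^*$ and Corollary \ref{maximum}.

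I do not anticipate a genuine obstacle here, since the structure is identical to Proposition \ref{error:prop}; the one place that demands care is confirming that the enlarged blowup $\beta_N^\#=L(P+1)h_N^2+(2+Lh_N)\delta_N$ still satisfies $\beta_N^\#/h_N\to0$, i.e.\ that Hypothesis (\ref{grid:hypothesis}2) is exactly what makes the $\graph(F)$-distance estimate \eqref{a2} collapse. Everything else is a transcription of the earlier proof with $y_N$ replaced by $y_N^\#$ and $\beta_N$ replaced by $\beta_N^\#$.
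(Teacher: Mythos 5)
Your proposal is correct and follows essentially the same route as the paper: reduce to the proof of Proposition \ref{error:prop} by verifying hypotheses \eqref{a1}--\eqref{a4} of Lemma \ref{Filippov:substitute} for the interpolated grid trajectories, with Hypothesis (\ref{grid:hypothesis}2) guaranteeing $\beta_N^\#/h_N\to 0$. The only cosmetic difference is the derivative bound, where the uniform constant should be $P+\sup_N\beta_N^\#/h_N$ rather than $P$, which changes nothing in the argument.
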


\begin{proof}
The proof is the same as that of Proposition \ref{error:prop}, provided the assumptions of 
Lemma \ref{Filippov:substitute} can be verified in the present setting. 
But for any sequence $(y_N^\#)_N$ with $y_N^\#\in S^{c,\#}_N(\lfloor\tau/h_N\rfloor,\beta_N^\#,\delta_N)$,
the linearly interpolated trajectories $y_N^\#(\cdot)$ satisfy
\begin{align*}
&\sup_{N\in\N}\sup_{t\in(0,t_{N,\lfloor\tau/h_N\rfloor})}\|\dot y_N^\#(t)\| \le P+\sup_{N\in\N}\beta_N^\#/h_N<\infty
\end{align*}
as well as
\begin{eqnarray*}
&&\dist((t,y_N^\#(t),\dot y_N^\#(t)),\graph(F))\\
&\le& \|(t,y_N^\#(t),\dot y_N^\#(t))-(t_{N,\lfloor t/h_N\rfloor},y_N^\#(t_{N,\lfloor t/h_N\rfloor}),\dot y_N^\#(t_{N,\lfloor t/h_N\rfloor}))\|\\
&&+\dist((t_{N,\lfloor t/h_N\rfloor},y_N^\#(t_{N,\lfloor t/h_N\rfloor}),\dot y_N^\#(t_{N,\lfloor t/h_N\rfloor})),\graph(F))\\
&\le& (L+1)(P+1)h_N+2\delta_N/h_N+L\delta_N\to 0\quad\text{as}\ N\to\infty,
\end{eqnarray*}
%as $N\to\infty$, 
and by the relaxed initial condition and state constraints, we have
\begin{align*}
&\dist(y_N^\#(0),X_0) \le \delta_N\to 0\quad\text{as}\ N\to\infty,\\
&\sup_{n=0,\ldots,N}\dist(y_N^\#(t_{N,n}),A(t_{N,n}))\le\delta_N\to 0\quad\text{as}\ N\to\infty,
\end{align*}
so that Lemma \ref{Filippov:substitute} is applicable.
\end{proof}

\begin{figure}[p]
\begin{center}
\includegraphics[scale=0.75]{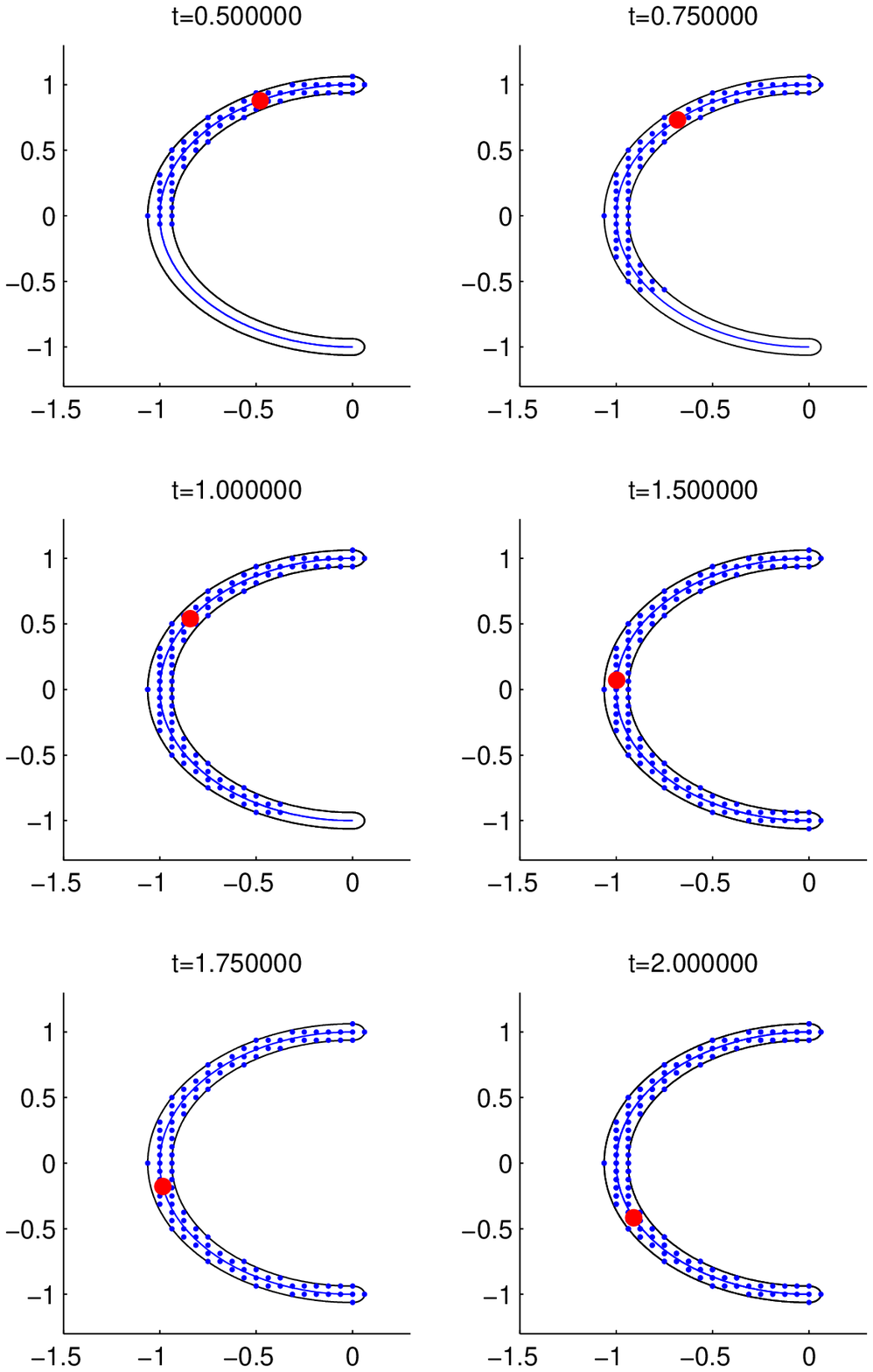}
\end{center}
\caption{Approximate reachable sets for Example \ref{oscillator} with step-size $h=0.25$.\label{025}}
\end{figure}

\begin{figure}[p]
\begin{center}
\includegraphics[scale=0.75]{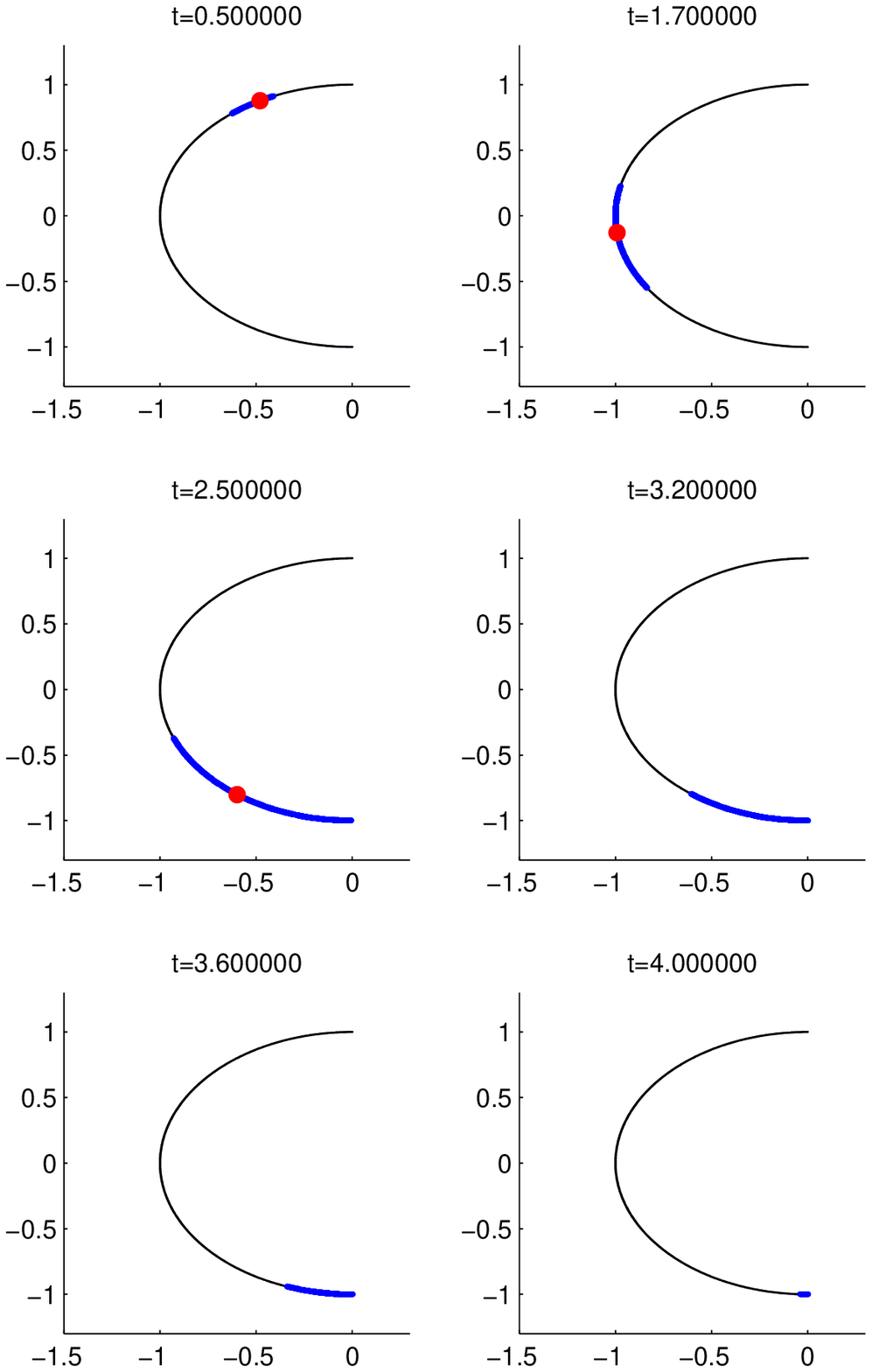}
\end{center}
\caption{Approximate reachable sets for Example \ref{oscillator} with step-size $h=0.05$.\label{005}}
\end{figure}

\begin{figure}[p]
\begin{center}
\includegraphics[scale=0.75]{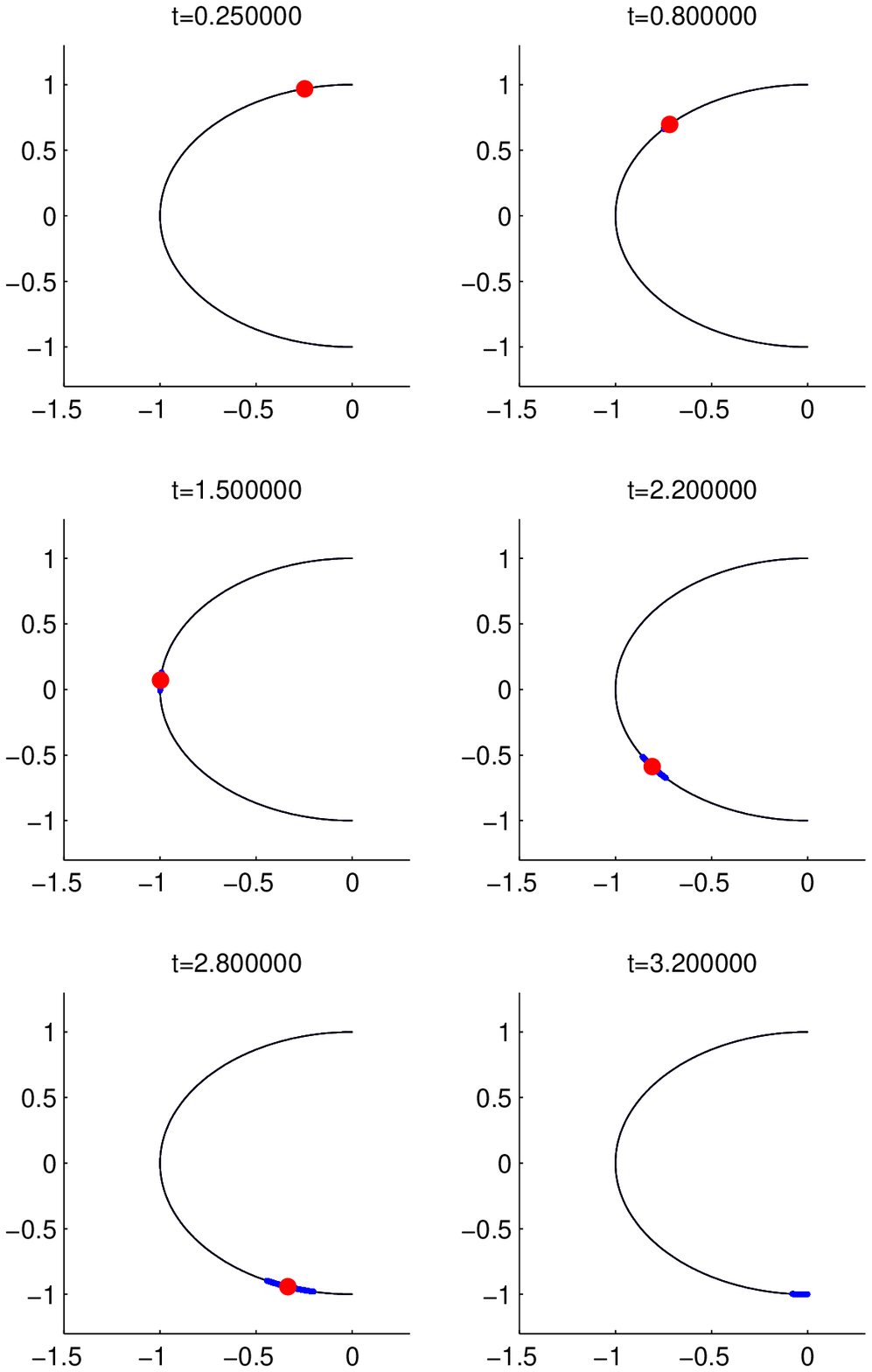}
\end{center}
\caption{Approximate reachable sets for Example \ref{oscillator} with step-size $h=0.01$.\label{001}}
\end{figure}

\begin{figure}[p]
\begin{center}
\begin{footnotesize}
\begin{tabular}[b]{cccc} 
    \toprule
    $h_N$ & $\beta_N$ & $\delta_N$ & $\tau_N^*$ \\ 
    \midrule
    0.2500 & 0.3281 & 0.0625 & $\infty$ \\
    0.2000 & 0.2080 & 0.0400 & $\infty$ \\
    0.1800 & 0.1678 & 0.0324 & 20.1600 \\
    0.1500 & 0.1159 & 0.0225 & 10.3500 \\
    0.1200 & 0.0737 & 0.0144 & 7.0800 \\
    0.1000 & 0.0510 & 0.0100 & 6.0000 \\
    0.0750 & 0.0285 & 0.0065 & 4.8000 \\
    0.0500 & 0.0126 & 0.0025 & 4.1000 \\
    0.0250 & 0.0031 & 0.0006 & 3.5500 \\
    0.0100 & 0.0005 & 0.0001 & 3.2900 \\
%    \midrule
%    0 & 0 & 0 & $\pi$ \\
    \bottomrule
\end{tabular}
\end{footnotesize}
\end{center}
\caption{Data for Example \ref{oscillator} with constants $L=1$ and $P=2$.
Here, $\tau_N^*$ denotes the maximal interval of existence for the numerical trajectories.\label{data}}
\end{figure}

\begin{figure}[p]
\begin{center}
\includegraphics[scale=0.7]{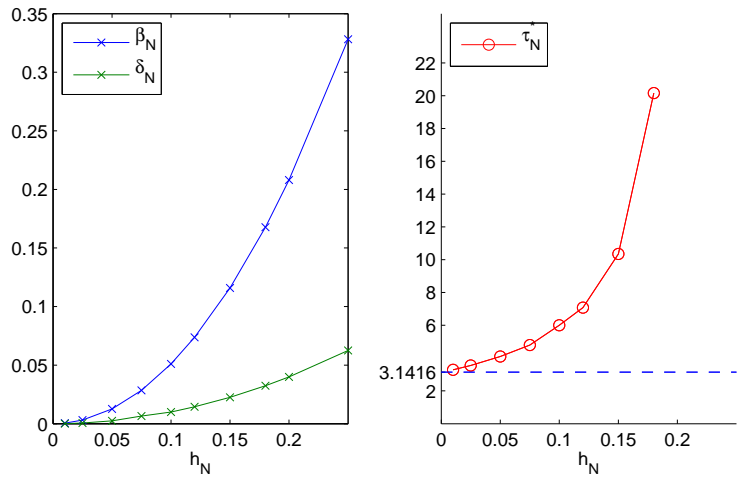}
\end{center}
\caption{Visualized data for Example \ref{oscillator}.\label{dataplot}}
\end{figure}

\section{Numerical example} \label{num:ex}

We continue the simple, but instructive Example \ref{oscillator} from Section \ref{inflated}.
As it is difficult to visualize sets of trajectories, Figures \ref{025}, \ref{005} and \ref{001} show the reachable sets 
\[R^{c,\#}_N(\nu,\beta_N^\#,\delta_N)=\{y_{N,\nu}: (y_{N,n})_{n=0}^\nu\in S^{c,\#}_N(\nu,\beta_N^\#,\delta_N)\}\]
of the spatially discretized inflated Euler scheme for step-sizes $h=0.25$, $h=0.05$ and $h=0.01$, i.e.\ the sets of all 
points that can be reached by trajectories of the numerical scheme at time $t=t_{N,\nu}$.

In the figures, the blue semiarc depicts the admissible set 
\[A=\{x\in\R^2:x_1^2+x_2^2 =1,\ x_1\le 0\},\]
which is constant in time.
The black line drawn around the semiarc represents the boundary of the admissible set $A+B_{\delta_N}(0)$
of the numerical scheme.
Blue dots are elements of the discrete reachable sets $R^{c,\#}_N(\nu,\beta_N^\#,\delta_N)$, and
the centers of the large red dots indicate the position of the exact solution at time $t=t_{N,\nu}$.
Whenever dots exceed their corresponding admissible sets, this is due to limited plotting precision.

The step-sizes have been chosen in such a way that we can observe the typical behavior of the inflated Euler scheme,
as predicted by the analysis in the sections above.
The exact solution is always approximated by a numerical trajectory up to the mesh-size of the
spatial grid, which implies that some approximate trajectories live at least as long as the exact solutions.
The approximate reachable sets, however, can be substantially larger than that of the exact dynamics.
Moreover, numerical trajectories may live much longer than their exact counterparts.
Figure \ref{025} shows an extreme case in which the reachable set of the numerical scheme becomes stationary
after time $t=1.5$, and hence some trajectories are defined on $\R_+$, while the exact solution ceases to
exist at time $\tau^*=\pi$.
In Figures \ref{005} and \ref{001}, we see that the discrete reachable sets converge towards the true 
solution as $h_N\to 0$ and that the life span
\[\tau_N^*:=\max\{t_{N,\nu}: S^{c,\#}_N(\nu,\beta_N^\#,\delta_N)\neq\emptyset\}\]
of the numerical scheme converges to the exact value $\tau^*=\pi$ as $h_N\to 0$.
This behavior is clearly recognizable in the data given in Figures \ref{data} and \ref{dataplot},
where the inflation and relaxation parameters $\beta_N^\#$ and $\delta_N$ as well as 
the life span $\tau_N^*$ are plotted as functions of $h_N$.

\section{Exploiting stability theorems} \label{stability:sec}

Many papers in optimal control theory are concerned with the relationship between the solution sets $S^u(\tau)$ and $S^c(\tau)$
for $\tau\in[0,T]$. 
The specification of conditions, under which property (\ref{Bressan}1) below holds, is subject of ongoing research.
An overview over the relevant literature as well as instructive counterexamples can be found in \cite{Bettiol:Bressan:Vinter:10}.

It is certain that conditions, which ensure property (\ref{Bressan}1), are not satisfied by the class of examples we are aiming at,
because property (\ref{Bressan}1) implies $S^c(\tau)\neq\emptyset$ for arbitrarily large $\tau$.
Nevertheless, in this section, we will assume this property as given and describe briefly its consequences for the inflated Euler scheme.

The ordinary Euler scheme (without inflation) has been analyzed in this context in \cite{Baier:Chahma:Lempio:07},
so that similar results should also be made available for the inflated scheme.
We would also like to provide a template, how property (\ref{Bressan}1) can be exploited to obtain short and simple
proofs for linear convergence of a numerical scheme.

To this end, we suppose the following hypothesis in addition to Hypotheses \ref{weak:hypothesis} and \ref{blowup:hypothesis}.
\begin{hypothesis} \label{Bressan}
The solution sets $S^u(\tau)$ and $S^c(\tau)$ enjoy a linear stability property, and the constraints are Lipschitz:
\begin{itemize}
\item [(1)] For any $\tau\in(0,\tau^*]$, there exists some $K=K_\tau>0$ such that for any $x^u(\cdot)\in S^u(\tau)$, there exists
some $x^c(\cdot)\in S^c(\tau)$ with
\[\max_{t\in[0,\tau]}\|x^u(t)-x^c(t)\| \le K\max_{t\in[0,\tau]}\dist(x^u(t),A(t)).\]
\item [(2)] The state constraints are $L_A$-Lipschitz, i.e.\ there exists $L_A>0$ with
\[\dist(A(t_1),A(t_2))\le L_A|t_1-t_2|\quad\forall t_1,t_2\in[0,T].\]
\end{itemize}
\end{hypothesis}

As Proposition \ref{overapproximate} yields that the first Hausdorff semidistance between the exact and the numerical 
solution sets is zero, we only need to take care of the second semidistance.
The proof of Proposition \ref{linear:prop} is remarkably short compared to the machinery 
set up in \cite{Baier:Chahma:Lempio:07} to prove a similar estimate under similar assumptions.

\begin{proposition} \label{linear:prop}
For any $\tau\in(0,\tau^*]$, there exists $C>0$ such that
\begin{equation} \label{linear}
\sup_{y_N\in S^c_N(\lfloor\tau/h_N\rfloor,\beta_N,0)}
\inf_{x(\cdot)\in S^c(\tau)}
\max_{n=0,\ldots,\lfloor\tau/h_N\rfloor}\|y_{N,n}-x(t_{N,n})\| \le Ch_N.
\end{equation}
\end{proposition}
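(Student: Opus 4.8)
The plan is to combine the two ingredients now available: the uniform, $O(h_N)$-accurate consistency estimate of Lemma~\ref{blow:up:size}, which lets one convert any inflated Euler trajectory into an exact \emph{unconstrained} trajectory at distance $O(h_N)$, and the linear stability Hypothesis~(\ref{Bressan}1), which converts that exact unconstrained trajectory into an exact \emph{constrained} one at a cost proportional to its constraint violation. First I would fix $\tau\in(0,\tau^*]$ and an arbitrary $y_N\in S^c_N(\lfloor\tau/h_N\rfloor,\beta_N,0)$, with piecewise linear interpolant $y_N(\cdot)$ on $[0,t_{N,\lfloor\tau/h_N\rfloor}]$. The a-priori bounds of Lemma~\ref{a:priori:bounds} keep everything inside $B_R(X_0)$, where $\|F\|\le P$, so Lemma~\ref{blow:up:size} applies on each subinterval.

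Next I would build an exact unconstrained trajectory $x^u(\cdot)\in S^u(\tau)$ tracking $y_N$. Start from $x^u(0)=y_{N,0}\in X_0$ (here $\delta_N=0$, so $y_{N,0}\in X_0$ exactly) and on each step $[t_{N,n},t_{N,n+1}]$ apply Lemma~\ref{blow:up:size}: there is $v\in F(t_{N,n},x^u(t_{N,n}))$ — take $x^u$ affine with slope $v$ on that interval, then correct — more precisely, use Lemma~\ref{blow:up:size} to select at each step a constant velocity keeping $x^u$ within the Gronwall recursion $e_{n+1}\le(1+Lh_N)e_n + L(P+1)h_N^2 + \beta_N$, where $e_n:=\|y_{N,n}-x^u(t_{N,n})\|$ and $\beta_N=L(P+1)h_N^2$ absorbs the inflation. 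Solving this recursion with $e_0=0$ gives $e_n\le \tfrac{2L(P+1)h_N^2}{Lh_N}(e^{LT}-1) = 2(P+1)(e^{LT}-1)h_N$, i.e.\ $\max_n\|y_{N,n}-x^u(t_{N,n})\|\le C_1 h_N$ for a constant $C_1$ independent of $N$ and of the chosen $y_N$. One must check the interpolant $x^u(\cdot)$ is a genuine $W^{1,1}$ solution of \eqref{ODI}: this is the standard fact that a piecewise-affine selection with $\dot x^u\in F(t_{N,n},x^u(t_{N,n}))$ and Lipschitz $F$ differs from an exact solution by $O(h_N)$; alternatively extend $x^u$ to $[0,\tau]$ exactly on the last subinterval.

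Then I would estimate the constraint violation of $x^u$. For $t\in[t_{N,n},t_{N,n+1}]$, using $\|\dot x^u\|\le P$, $\|\dot y_N\|\le P$, the relaxed constraint $\dist(y_{N,n},A(t_{N,n}))=0$ (since $\delta_N=0$), Hypothesis~(\ref{Bressan}2) giving $\dist(A(t),A(t_{N,n}))\le L_A h_N$, and $\|x^u(t)-x^u(t_{N,n})\|\le Ph_N$, $\|y_N(t_{N,n})-x^u(t_{N,n})\|\le C_1h_N$, the triangle inequality yields $\dist(x^u(t),A(t))\le (L_A + 2P + C_1)h_N =: C_2 h_N$, uniformly in $t\in[0,\tau]$. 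Now Hypothesis~(\ref{Bressan}1) gives $x^c(\cdot)\in S^c(\tau)$ with $\max_{t\in[0,\tau]}\|x^u(t)-x^c(t)\|\le K_\tau C_2 h_N$, and combining, $\max_n\|y_{N,n}-x^c(t_{N,n})\|\le (C_1+K_\tau C_2)h_N=:Ch_N$. Since $y_N$ was arbitrary and $C$ does not depend on it, \eqref{linear} follows.

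The main obstacle is the second step: producing an \emph{exact} $W^{1,1}$ solution $x^u(\cdot)$ of \eqref{ODI} that stays $O(h_N)$-close to the discrete trajectory at the grid points, rather than merely a piecewise-affine pseudo-solution. Lemma~\ref{blow:up:size} is exactly the device for this — it guarantees a velocity $v\in F(t_{N,n},z)$ realizing the one-step estimate — so iterating it and controlling the accumulated error by the discrete Gronwall inequality above is the crux; the constraint-violation bookkeeping and the invocation of Hypothesis~(\ref{Bressan}1) are then routine. Care is also needed that all constants ($C_1$, $C_2$, and hence $C$, though not $K_\tau$ itself) are genuinely uniform over the family $S^c_N(\lfloor\tau/h_N\rfloor,\beta_N,0)$, which they are because they depend only on $L$, $P$, $L_A$, $T$.
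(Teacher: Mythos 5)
Your overall architecture is exactly the paper's: (i) produce an exact \emph{unconstrained} solution $x^u(\cdot)\in S^u$ within $O(h_N)$ of the discrete trajectory, (ii) bound its constraint violation by $O(h_N)$ using $y_{N,n}\in A(t_{N,n})$, the a-priori velocity bound and Hypothesis~(\ref{Bressan}2), and (iii) invoke Hypothesis~(\ref{Bressan}1). Steps (ii) and (iii) in your write-up are correct and essentially identical to the paper's.

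The gap is in step (i). Lemma~\ref{blow:up:size} cannot be used in the direction you use it: its hypothesis requires $x(\cdot)$ to be an exact solution of \eqref{ODI} on $[t,t+h]$, and it then produces a velocity $v\in F(t,z)$ so that the \emph{Euler step} $z+hv$ tracks the \emph{exact} solution. That is the consistency direction underlying Proposition~\ref{overapproximate}. In your construction neither the given discrete trajectory $y_N$ nor the piecewise-affine $x^u$ you are building is an exact solution, so the lemma's one-step estimate is not available for the recursion $e_{n+1}\le(1+Lh_N)e_n+\dots$ as you set it up; and even if one proves the analogous reversed one-step estimate, the resulting $x^u$ is a piecewise-affine pseudo-solution ($\dot x^u\in F(t_{N,n},x^u(t_{N,n}))$ rather than $\dot x^u(t)\in F(t,x^u(t))$), so you have merely replaced one approximate trajectory by another. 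The ``standard fact'' you then appeal to — that such a pseudo-solution lies within $O(h_N)$ of a genuine solution — \emph{is} Filippov's theorem, and once you invoke it the whole detour is unnecessary: the paper simply observes that the interpolant of the given $y_N$ already satisfies $\dist(\dot y_N(t),F(t,y_N(t)))\le 2L(P+1)h_N$ (the inflation contributes $\beta_N/h_N=L(P+1)h_N$ and the Lipschitz continuity of $F$ the rest) and applies Filippov's theorem \cite[Theorem 2.4.1]{Aubin:Cellina:84} directly to obtain $x^u_N$ with $\max_t\|y_N(t)-x^u_N(t)\|\le C_1h_N$. Replace your construction of $x^u$ by this residual estimate plus an explicit citation of Filippov's theorem and the proof is complete; as written, the construction of $x^u$ is not justified.
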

\begin{proof}
A short computation shows that for arbitrary $y_N\in S^c_N(\lfloor\tau/h_N\rfloor,\beta_N,0)$, $N\in\N$,
the residual of the interpolated Euler trajectory $y_N(\cdot)$ inserted into the differential inclusion \eqref{ODI} satisfies
\[\dist(\dot y_N(t),F(t,y_N(t))) \le 2L(P+1)h_N\quad\forall t\in[0,t_{N,\lfloor\tau/h_N\rfloor}].\]
Therefore, Filippov's theorem \cite[Theorem 2.4.1]{Aubin:Cellina:84} guarantees that there exist $C_1=C_1(L,P)>0$ 
independent of the choice of $y_N$ and a solution $x_N^u(\cdot)\in S^u(t_{N,\lfloor\tau/h_N\rfloor})$
such that
\[\max_{t\in[0,t_{N,\lfloor\tau/h_N\rfloor}]}\|y_N(t)-x^u_N(t)\|\le C_1h_N.\]
Since $y_{N,n}\in A(t_{N,n})$ holds for all $n\in\{0,\ldots,\lfloor\tau/h_N\rfloor\}$, 
Lemma \ref{a:priori:bounds} and Hypothesis (\ref{Bressan}2) imply 
\begin{eqnarray*}
\dist(y_N(t),A(t)) 
&\le& \|y_N(t)-y_N(t_{N,\lfloor t/h_N\rfloor})\|  
+\dist(y_N(t_{N,\lfloor t/h_N\rfloor}),A(t_{N,\lfloor t/h_N\rfloor}))\\
&&+\dist(A(t_{N,\lfloor t/h_N\rfloor}),A(t)) \le (P+L_A)h_N,
\end{eqnarray*}
so that 
\[\dist(x_N^u(t),A(t))\le C_2h_N\quad\forall t\in[0,t_{N,\lfloor\tau/h_N\rfloor}]\]
with $C_2=C_1+P+L_A$.
By Hypothesis (\ref{Bressan}1), there exist $x_N^c(\cdot)\in S^c(t_{N,\lfloor\tau/h_N\rfloor})$ and a constant
$K>0$ independent of the particular $x_N^u(\cdot)$, such that
\[\|x_N^u(t)-x_N^c(t)\|\le KC_2h_N\quad\forall t\in[0,t_{N,\lfloor\tau/h_N\rfloor}],\]
and hence estimate \eqref{linear} follows with $C=C_1+KC_2$.
\end{proof}

It is not difficult to see that the above result persists under a spatial discretization with parameter $\delta_N=h_N^2$.
In this case, the first Hausdorff semidistance is still covered by Proposition \ref{rho:2:estimate}.

\section{Conclusion}

The aim of the present paper was to prove convergence of the inflated Euler scheme in the presence
of state constraints and under weak assumptions, not excluding applications such as the collision avoidance 
problems discussed in \cite{Gerdts:Xausa:13} and 
\cite{Landry:Gerdts:Henrion:Hoemberg:13}.
This aim is achieved, but the convergence statements are not as strong as one might wish,
because we only showed pure convergence. 
It would be desirable to know if there were any conditions on the right-hand side and the state constraints
that are substantially weaker than those imposed in \cite{Baier:Chahma:Lempio:07}, 
\cite{Bettiol:Bressan:Vinter:10} and \cite{Bettiol:Frankowska:Vinter:12}, but strong enough 
to specify a rate or a speed of convergence of some Euler-like scheme.
In that sense, we leave a gap in the literature, which we are not able to fill at the moment.

\bibliographystyle{plain}
\bibliography{constraints}

\end{document}